\def\C{\mathbb C}
\def\R{\mathbb R}
\def\Z{\mathbb Z}
\def\ge{\geqslant}
\theoremstyle{plain}
\newtheorem{thm}{Theorem}[section]
\newtheorem{lemma}[thm]{Lemma}
\newtheorem{cor}[thm]{Corollary}
\newtheorem{cl}[thm]{Claim}
\theoremstyle{definition}
\newtheorem{defn}[thm]{Definition}
\begin{document}

\title[Combinatorial proof of the Maslov index formula]{Combinatorial proof of Maslov index formula in Heegaard Floer Theory}

\author{Roman Krutowski}
\address{Department of Mathematics, University of California, Los Angeles}
\email{romankrut@ucla.edu}

\begin{abstract}
We prove Lipshitz's Maslov index formula in Heegaard Floer homology via the combinatorics of Heegaard diagrams.
\end{abstract}

\maketitle

\setcounter{section}{0}

\section{Introduction}

In \cite{OzSz2004b, OzSz2004c} Peter Ozsv\'{a}th and Zolt\'{a}n Szab\'{o} introduced Heegaard Floer homology, a collection of invariants for closed oriented 3-manifolds. Since then Heegaard Floer homology has emerged as an extremely powerful invariant, producing strong results in low-dimensional topology (see \cite{OzSz2004b, OzSz2004c, OzSz2006, OzSz2004g, Ni2009, Ghi2008, Ni2009f}).  
All of the numerous versions of Heegaard Floer homology involve counting the number of points in the
unparametrized moduli space of $J$-holomorphic disks of a certain Maslov index joining some
intersection points of two half-dimensional tori in a certain symmetric product of a surface. 

The most notable advantage of Heegaard Floer homology when compared to other types of Floer Homology is its combinatorial nature which allows computation of these invariants in various cases. One of the ingredients is the combinatorial formula for Maslov index $\mu(\varphi)$ of a Whitney disk $\varphi$, which is the Fredholm index of some differential operator, or alternatively, a homology class of a certain loop in the Lagrangian Grassmanian. In \cite{Ras2003}, Jacob Rasmussen gave a formula that relates the intersection
number of the disk with the fat diagonal in the symmetric product, with its Maslov index. Later on, Robert Lipshitz in the paper devoted to the cylindrical reformulation of the whole theory \cite{Lip2006} gave a purely combinatorial formula for the Maslov index of these disks.

The proof of this formula in \cite{Lip2006, Lip2014} is based on an elegant geometric approach. In this paper, we provide a combinatorial proof of this formula which is inspired by the proof of the index formula for Maslov $n$-gons due to Sucharit Sarkar \cite{Sar2011}. 

\smallskip{}
Let $(\Sigma, \bm \alpha, \bm \beta)$ be a Heegaard diagram. Any Whitney disk $\varphi \in \pi_2(\bm x, \bm y)$ connecting $\bm x, \bm y \in Sym^g(\Sigma)$ has a \emph{shadow} $D(\varphi)$ (see Definition~\ref{shadow}), a certain $2$-chain in $\Sigma$ with boundary  satisfying $\partial(\partial D(\varphi) \cap \bm \alpha)= \bm y - \bm x$  and  $\partial(\partial D(\varphi) \cap \bm \beta)= \bm x - \bm y$. We denote the set of such $2$-chains by $\mathcal{D}(\bm x, \bm y)$ and call them \emph{domains}. We denote by $\mathcal{D}$ the set of all domains in all Heegaard diagrams (see Section~\ref{not} for details).

In this paper, we assume that $\bm \alpha$ curves intersect $\bm \beta$ curves perpendicularly with respect to some metric on $\Sigma$. Among the domains, there are two special types that serve us as building blocks. \emph{Bigons} and \emph{rectangles} are $2$-sided and $4$-sided domains, respectively, which are homeomorphic to disks with all angles equal to $90^\circ$ and which do not contain any $\bm x$- and $\bm y$-coordinates in the interior.

Given two domains $D \in \mathcal{D}(\bm x, \bm y)$ and $D' \in \mathcal{D}(\bm y, \bm z)$ the composition of these domains $D*D'$ is a domain $D+D' \in \mathcal{D}(\bm x, \bm z)$.
 
In this paper any map $\overline{\mu} \colon \mathcal{D} \rightarrow \Z $ will be called an \emph{index}.
An index $\overline{\mu}$ is said to be \emph{additive} if for any  Heegaard diagram and any two of its domains $D \in \mathcal{D}(\bm x, \bm y)$ and $D' \in \mathcal{D}(\bm y, \bm z)$ the following holds
\[
\overline{\mu}(D*D')=\overline{\mu}(D)+\overline{\mu}(D').
\]
In Section~\ref{not} we introduce two types of transformations of any Heegaard diagram $(\Sigma, \bm \alpha, \bm \beta)$ which assign to each of its domain $D$ a new domain $D'$ in the new Heegaard diagram. These transformations are called \emph{finger moves} and \emph{empty stabilizations}.
An index $\overline{\mu}$ is said to be \emph{stable} if for any such transformation $\overline{\mu}(D)=\overline{\mu}(D')$.

Define the \emph{combinatorial index} $\widetilde{\mu}$ of a domain $D \in \mathcal{D}(\bm x, \bm y)$ via the formula due to Lipshitz \cite{Lip2006}
\begin{equation}\label{combind}
\widetilde{\mu}(D) \coloneqq \widetilde{\mu}_{\bm x}(D)+\widetilde{\mu}_{\bm y}(D)+e(D).
\end{equation}
where $\widetilde{\mu}_{\bm x}(D)$ is a point measure of $D$ at $\bm x$ and $e(D)$ is the Euler measure of $D$ (see Subsection~\ref{prelim}).

We are now ready to formulate our main results.

\begin{thm}\label{unique}
There exists a unique index $\overline{\mu} \colon \mathcal{D} \rightarrow \Z$ satisfying the following axioms:

\begin{enumerate}
    \item $\overline{\mu}$ is additive;
    \item $\overline{\mu}$ is stable;
    \item $\overline{\mu}(B)=1$ for any bigon $B \in \mathcal{D}$;
    \item $\overline{\mu}(R)=1$ for any rectangle $R \in \mathcal{D}$.
\end{enumerate}
Moreover, this index agree with the combinatorial index $\widetilde{\mu}$. 
\end{thm}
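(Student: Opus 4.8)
The plan is to prove the two halves of the statement separately: first that the axioms determine at most one index (uniqueness), and then that the combinatorial index $\widetilde{\mu}$ satisfies all four axioms (existence). Combining these identifies $\widetilde{\mu}$ as the unique index, which is the final assertion. Throughout I use that the composition $D*D'$ is addition of $2$-chains and that $\widetilde{\mu}_{\bm x}$ and $e$ are linear functionals on $2$-chains.

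For existence I would check the four axioms for $\widetilde{\mu}$. The normalizations (3) and (4) are direct: a bigon $B$ is a disk with two $90^\circ$ corners, one an $\bm x$- and one a $\bm y$-coordinate, so $e(B)=\tfrac12$ and $\widetilde{\mu}_{\bm x}(B)=\widetilde{\mu}_{\bm y}(B)=\tfrac14$, giving $\widetilde{\mu}(B)=1$; a rectangle $R$ has two $\bm x$- and two $\bm y$-corners and $e(R)=0$, giving $\widetilde{\mu}(R)=1$. Additivity (1) follows from the linearity of $e$ and of the point measures, once one checks that the contributions at the coordinates of the shared generator $\bm y$ recombine correctly when the two boundaries are concatenated. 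Stability (2) is local: an empty stabilization leaves the $2$-chain, and hence $e$, unchanged, and introduces a coordinate around which $D$ has multiplicity zero, contributing nothing to the point measures; a finger move modifies $D$ only inside a small disk, and the resulting change in Euler measure is exactly cancelled by the change in the point measures at the two new intersection points.

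For uniqueness, the crux is the following \emph{decomposition statement}: every domain $D$ can be carried, by a finite sequence of finger moves and empty stabilizations, to a domain $\widehat{D}$ admitting a factorization $\widehat{D}=D_1*\cdots*D_k$ in which each $D_i$ is a bigon, a rectangle, or the reverse of one of these. Granting this, let $\overline{\mu}$ be any index satisfying (1)--(4). Additivity applied to the constant domain gives $\overline{\mu}(0)=0$, whence the reverse of a bigon or rectangle has $\overline{\mu}=-1$. Stability then gives $\overline{\mu}(D)=\overline{\mu}(\widehat{D})$, additivity gives $\overline{\mu}(\widehat{D})=\sum_i\overline{\mu}(D_i)$, and the normalizations evaluate each term as $\pm 1$. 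Since the moves and the factorization are produced without reference to $\overline{\mu}$, any two admissible indices agree on $D$; running the same computation for $\widetilde{\mu}$ shows the common value is $\widetilde{\mu}(D)$.

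The decomposition statement is where I expect the real work, and it is the step modeled on Sarkar's treatment of $n$-gons. The essential case is that of a \emph{positive} domain, which I would factor by inducting on a complexity such as total multiplicity: using empty stabilizations to plant auxiliary intersection points inside regions of multiplicity greater than one and finger moves to run arcs from them to the boundary, one excises a single bigon or rectangle at a convex corner and strictly decreases the complexity. A domain with negative multiplicities is then handled by the same scheme run with reverse pieces, or equivalently reduced to the positive case by additivity. The main obstacles are to guarantee that this process terminates, that each excised piece is a genuine bigon or rectangle containing no $\bm x$- or $\bm y$-coordinate in its interior, and that the intermediate generators produced at successive cuts match up so that the factorization under $*$ is well defined; verifying that finger moves and empty stabilizations alone realize every cut required, in particular across high-multiplicity regions, is the heart of the argument.
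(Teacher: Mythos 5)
Your overall architecture is exactly the paper's: existence by verifying the four axioms for $\widetilde{\mu}$, and uniqueness by transporting an arbitrary domain, via finger moves and empty stabilizations, to one that factors as a composition of bigons, rectangles and their negatives, then forcing the value by additivity, stability and the normalizations. Granting your ``decomposition statement,'' your uniqueness derivation is correct (including $\overline{\mu}(0)=0$ and hence $\overline{\mu}(-B)=\overline{\mu}(-R)=-1$), and your corner and Euler-measure computations for the bigon and rectangle are right. But the decomposition statement is precisely Theorem~\ref{decomp}, whose proof is the entire content of Section~\ref{main} (Steps 0--5 together with Claim~\ref{whole}), and you leave it unproven: you state it, sketch a strategy, and yourself list the obstacles you have not overcome. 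That is a genuine gap, located exactly where the paper does its real work.

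Moreover, the sketch you offer would not close that gap as stated. First, an \emph{empty} stabilization is, by definition, performed in a region where the domain has coefficient zero, so ``using empty stabilizations to plant auxiliary intersection points inside regions of multiplicity greater than one'' is not a legal move. Second, induction on total multiplicity with excision of a piece at a convex corner presupposes a corner to excise; the paper must separately handle domains with no usable corners at all, e.g.\ $D=[\Sigma]\in\mathcal{D}(\bm x,\bm x)$ (Claim~\ref{whole}), domains with non-embedded or disconnected boundary (Steps 1--2), and the handles of an embedded domain (the ``palm tree'' finger moves of Step 4a). Third, your fallback for negative coefficients --- reduction ``to the positive case by additivity'' --- fails, because writing $D=D_+-D_-$ with $D_\pm\geq 0$ does not produce domains: $D_\pm$ need not satisfy $\partial(\partial D_\pm|_{\bm\alpha})=\bm y-\bm x$ for any pair of generators, so $*$-factorizations of $D_\pm$ make no sense; the paper never reduces to positive domains but simplifies the boundary directly. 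Two smaller points on the existence half: additivity of $\widetilde{\mu}$ is not mere linearity plus bookkeeping --- its content is the intersection-number identity $\widetilde{\mu}_{\bm x}(D')-\widetilde{\mu}_{\bm y}(D')=\partial D'|_{\bm\alpha}\cdot\partial D|_{\bm\beta}$ of Lemma~\ref{add}; and in your finger-move stability argument the point measures do not change at all (the new intersection points are not coordinates of generators, and the isotopy is supported away from $\bm x$ and $\bm y$) --- the cancellation occurs inside the Euler measure, between the change in Euler characteristics of the regions and the new corner contributions.
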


\begin{thm}\label{Lipshitz}
Let $(\Sigma, \bm \alpha, \bm \beta)$ be a Heegaard diagram and $\varphi$ be a Whitney disk connecting two generators of the corresponding Heegaard Floer chain complex. Then  
\[
\mu(\varphi)=\widetilde{\mu}(D(\varphi)),
\]
where $\mu(\varphi)$ is the Maslov index of $\varphi$. 
\end{thm}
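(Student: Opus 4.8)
The plan is to deduce Theorem~\ref{Lipshitz} from Theorem~\ref{unique} by showing that the analytically defined Maslov index, viewed as a function of the domain, satisfies the four axioms that characterize the unique index. First I would recall the standard fact that $\mu(\varphi)$ depends only on the homotopy class of $\varphi$, and indeed only on its shadow $D(\varphi)$ \cite{OzSz2004b}; since every domain in $\mathcal{D}(\bm x, \bm y)$ is realized as the shadow of some homotopy class of Whitney disk, this produces a well-defined index $\mathcal{D} \to \Z$, $D(\varphi) \mapsto \mu(\varphi)$, which I will simply call $\mu$. It then suffices to verify axioms (1)--(4) for this $\mu$: the conclusion $\mu = \widetilde{\mu}$ follows immediately from the uniqueness clause of Theorem~\ref{unique}.

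Axiom (1) is the additivity of the Fredholm index. Under concatenation, the homotopy class $\varphi * \psi$ has shadow $D(\varphi) + D(\psi)$, and a standard linear gluing argument for the linearized $\bar\partial$-operators gives $\mu(\varphi * \psi) = \mu(\varphi) + \mu(\psi)$; equivalently, the expected dimension of the moduli space is additive under juxtaposition of the boundary conditions. For axiom (2), a finger move is realized by an ambient (Hamiltonian) isotopy of the tori $\mathbb{T}_{\bm\alpha}$ and $\mathbb{T}_{\bm\beta}$ inside $Sym^g(\Sigma)$; such an isotopy identifies the corresponding homotopy classes and deforms the Lagrangian boundary conditions continuously, so the index --- being a homotopy invariant of the associated loop in the Lagrangian Grassmannian --- is preserved, giving $\mu(D) = \mu(D')$.

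The two remaining points are the main work. For the empty stabilization I would stretch the neck along the connected-sum circle joining $\Sigma$ to the stabilizing torus; because the stabilization region is empty, the linearized operator of the stabilized disk splits as a direct sum of the operator for $D$ and an index-$0$ operator supported near the single new intersection point, so the index is unchanged and axiom (2) holds in full. For axioms (3) and (4) I would compute the index of the model disks directly: a bigon (resp.\ rectangle) is supported, away from the constant coordinates, in one (resp.\ two) factors of $\Sigma$, where it is the standard holomorphic half-disk with two $90^\circ$ corners; a Riemann--Roch computation, or the observation that its unparametrized moduli space modulo the one-dimensional automorphism group fixing the two boundary marked points is rigid, yields Maslov index $1$ in each case.

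The hard part will be the empty-stabilization invariance, since it genuinely changes the dimension of the ambient symmetric product and so cannot be handled by a naive homotopy; the gluing and neck-stretching argument that identifies the stabilized linearized operator with such a direct sum is the crux. The bigon and rectangle computations are classical but are precisely where the normalization ``$=1$'' enters, so I would present them with care. With all four axioms verified, Theorem~\ref{unique} forces $\mu = \widetilde{\mu}$, which is exactly the assertion of Theorem~\ref{Lipshitz}.
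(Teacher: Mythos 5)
Your opening move --- ``$\mu(\varphi)$ depends only on the shadow, every domain is a shadow, hence $\mu$ is a well-defined index $\mathcal{D}\to\Z$'' --- contains a genuine gap, and it is exactly the step that your appeal to the uniqueness clause of Theorem~\ref{unique} rests on. The shadow map $\pi_2(\bm x,\bm y)\to\mathcal{D}(\bm x,\bm y)$ is \emph{not} surjective for every Heegaard diagram. In the standard genus-$1$ diagram for $S^3$, where $\alpha$ and $\beta$ meet in a single point $x$, the domain $[\Sigma]\in\mathcal{D}(x,x)$ is not the shadow of any Whitney disk: since $\pi_1(T^2)$ is free abelian on $[\alpha],[\beta]$, the boundary of any class in $\pi_2(x,x)$ must be null-homotopic, and $\pi_2(Sym^1\Sigma)=\pi_2(T^2)=0$, so $\pi_2(x,x)$ is trivial while $\mathcal{D}(x,x)\supset\Z[\Sigma]$. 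One cannot even define $\mu([\Sigma])$ by stabilizing first, because an empty stabilization with respect to $[\Sigma]$ requires a region of coefficient $0$, which does not exist. In addition, well-definedness (equal shadows imply equal Maslov index) is \cite[Proposition 2.15]{OzSz2004b} and is only available for $g>2$, so it too needs a stabilization argument rather than a bare citation. Consequently the analytic Maslov index is only a \emph{partially} defined index, on shadow-realizable domains; Theorem~\ref{unique} characterizes indices defined on \emph{all} of $\mathcal{D}$, and its uniqueness clause says nothing about a partial one. To rescue your plan you would have to re-prove uniqueness for the class of realizable domains, verifying that every operation in the decomposition procedure of Theorem~\ref{decomp} (finger moves, empty stabilizations, composition with bigons, rectangles, and $\pm[\Sigma]$) stays inside that class --- at which point you are no longer quoting Theorem~\ref{unique} but re-running its proof.

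The paper avoids this by never descending $\mu$ to domains at all: given $\varphi$, it stabilizes so that $g(\Sigma)>1$, applies Theorem~\ref{decomp} to get $D(\varphi')=D_1*\cdots*D_k$, realizes each bigon/rectangle piece by an honest Whitney disk $\varphi_i$ with $D(\varphi_i)=-D_i$, and then applies \cite[Proposition 2.15]{OzSz2004b} to the concatenation $\varphi'*\varphi_1*\cdots*\varphi_k\in\pi_2(\bm x,\bm x)$, whose shadow vanishes, to conclude that it is the trivial class; additivity of $\mu$ on homotopy classes then yields $\mu(\varphi')=-(\mu(\varphi_1)+\cdots+\mu(\varphi_k))$. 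Your remaining ingredients do match the paper's: stability under finger moves and under empty stabilization is the paper's Lemma in Section~\ref{not} --- and is easier than you anticipate, since $\mu$ is defined here as a loop class in the Lagrangian Grassmannian, so stabilization invariance is a direct-sum-with-a-fixed-line observation and no neck-stretching or linear gluing analysis is required --- while additivity of $\mu$ is standard and $\mu(B)=\mu(R)=1$ is quoted from \cite{OzSz2004b}. So the content of your four verifications is essentially right; the flaw is packaging them through a uniqueness statement that your $\mu$ does not qualify for.
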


\medskip
{\bf Acknowledgments.} I would like to thank Sucharit Sarkar for his guidance and encouragement, Ko Honda for helpful conversations and suggestions, and Gleb Terentiuk for valuable comments.

\section{Notations and preliminary results}\label{not}

\subsection{Heegaard Floer Theory preliminaries}\label{prelim}

To start, recall the definition of a Heegaard diagram. Let $\Sigma$ be an oriented Riemannian surface (i.e., there is a fixed Riemannian metric) and $\bm \alpha= \{\alpha_1, \ldots, \alpha_g\}$ and $\bm \beta= \{\beta_1, \ldots, \beta_g\}$ be two sets of non-intersecting \emph{oriented}\footnote{This is nonstandard but useful to assume for the entirety of the paper.} simple closed curves such that both $\bm \alpha$ and $\bm \beta$ generate half-dimensional subspaces of $H_1(\Sigma)$ (in particular this means that $g$ is not smaller than the genus $g(\Sigma)$ of the surface). The last part is equivalent to assuming that the complement of $\bm \alpha$ consists of $g-g(\Sigma)+1$ components (the same holds for $\bm \beta)$.  We also assume that the $\bm \alpha$ and $\bm \beta$ curves intersect perpendicularly. We call by \emph{regions} closures of connected components of $\Sigma \setminus (\bm \alpha \cup \bm \beta)$. The collection $(\Sigma, \bm \alpha, \bm \beta)$ is usually called an \emph{unpointed Heegaard diagram}, but in the text we refer to it simply as a \emph{Heegaard diagram}. We need not assume these diagrams to be pointed, because it is not relevant for the Maslov index calculations.

The Heegaard Floer homology chain complex of a diagram $(\Sigma, \bm \alpha, \bm \beta)$ is generated by $g$-tuples of points of the form $\bm x= \{x_1, \ldots, x_g\}$ where $x_i \in \alpha_i \cap \beta_{\sigma(i)}$ and $\sigma \in S_g$ is arbitrary. We may regard $\bm x$ as a point in $Sym^g(\Sigma)$ which belongs to $T_{\bm \alpha} \cap T_{\bm \beta}$ where $T_{\bm \alpha}=\alpha_1\times \alpha_2 \times \ldots \times \alpha_g$ and $T_{\bm \beta}=\beta_1\times \beta_2 \times \ldots \times \beta_g$. 

Let as consider the unit disk $D^2$ in $\C$ with the usual orientation. Let $s_1 \subset \partial D^2$ be the portion of the oriented boundary that connects $i$ to $-i$ and let $s_2\subset \partial D^2$ be the remaining portion of the boundary connecting $-i$ to $i$. The differentials and chain maps are signed counts of $J$-holomorphic maps $u: D^2 \setminus \{i, -i\} \rightarrow Sym^g(\Sigma)$ representing Whitney disks in $\pi_2(\bm x, \bm y)$. Here a Whitney disk is a homotopy type of maps from $D^2$ to $Sym^g(\Sigma)$ such that $s_1$ is mapped to $T_{\bm \alpha}$, $s_2$ is mapped to $T_{\bm \beta}$ and $i$, $-i$ are sent to $\bm x$ and $\bm y$, respectively.

Let $D$ be a $2$-chain obtained as a sum of regions of a Heegaard diagram $(\Sigma, \bm \alpha, \bm \beta)$ with integer coefficients, $D=\sum_{R}a(R)R$ where $a(R) \in \Z$. Such a $2$-chain $D$ is called a \emph{domain} if there exist two generators $\bm x$ and $\bm y$ such that $ \partial (\partial D|_{\bm \alpha}) \coloneqq \partial( \partial D \cap \bm \alpha)= \bm y - \bm x$ (here we regard $\bm x$ and $\bm y$ as $0$-chains in $\Sigma$) and  $\partial(\partial D|_{\bm \beta}) = \bm x - \bm y$. We say that $D$ \emph{connects} $\bm x$ and $\bm y$ and denote the set of these domains by $\mathcal{D}(\bm x, \bm y)$. We also denote by $\mathcal{D}$ the set of all pairs of $((\Sigma, \bm \alpha, \bm \beta), D)$, where the first element in the pair is some Heegaard diagram and the second is some domain in this Heegaard diagram.

\begin{defn}\label{shadow}
 Given a Whitney disk $\varphi \in \pi_2(\bm x, \bm y)$ we associate a domain $D(\varphi) \in \mathcal{D}(\bm x, \bm y)$ called \emph{shadow} of $\varphi$ as follows: to a region $R$ we assign the number $n_R(\varphi)$ which is equal to the intersection number $\varphi \cdot Z_r$, where $Z_r=\{r\} \times Sym^{g-1}(\Sigma)$ for some $r$ in the interior of $R$  (note that $\varphi \cdot Z_r$ does not depend on a choice of $r$). Then set $$D(\varphi)=\sum_Rn_R(\varphi)R\in\mathcal{D}(\bm x, \bm y).$$
\end{defn} 

\smallskip{} 
Let us recall the definition of the Maslov index of a Whitney disk $\varphi \in \pi_2(\bm x, \bm y)$ where $\bm x$ and $\bm y$ are some generators of the chain complex associated with a given Heegaard diagram $(\Sigma, \bm \alpha, \bm \beta)$.
The generators $\bm x$ and $\bm y$ are regarded as points belonging to $T_{\bm \alpha} \cap T_{\bm \beta} \subset Sym^g(\Sigma)$. Let $Gr_g$ be a space of totally real $g$-dimensional subspaces of $\C^g$. Since $\varphi$ is regarded as a map from $D^2 \setminus \{i, -i\}= [0,1] \times \R$ to $Sym^g(\Sigma)$ we take the pullback of $T Sym^g(\Sigma)$ to $D^2$ which is a trivial bundle $\C^g$. At point $\bm x$ we pick a ``short'' path $\gamma_{\bm x}$ between $T_{\bm x} (T_{\bm \alpha})$ and $T_{\bm x} (T_{\bm \beta})$ in $Gr_g$  and also construct in the same fashion a path $\gamma_{\bm y}$ for $y$ (by ``short'' we mean a path in $\mathcal{P}^-(Gr_g)$ in Seidel's terminology, see \cite[Section 11]{Seidel2008}).
The map from ${0}\times \R$ to $T_{\bm \alpha}$ assigns a path $\gamma_0$ in $Gr_g$ by considering pullback of $T(T_{\bm \alpha}) \subset TSym^g(\Sigma)$. Analogously, we get a path $\gamma_1$. Finally, the Maslov index $\mu(\varphi)$ is equal to a composition of all these four paths 
\[ \mu(\varphi)=[\gamma_{\bm x} \circ \gamma_{0} \circ \gamma_{\bm y} \circ \gamma_1] \in H_1(Gr_g) = \Z.\]


We also introduce \emph{combinatorial index} of any domain $D \in \mathcal{D}(\bm x, \bm y)$. 
For $x_i \in \bm x$ define  $\widetilde{\mu}_{x_i}(D)$ as the average of the coefficients of $D$ in the $4$ regions to which $x_i$ belongs. Then the \emph{point measure} of $D$ at $\bm x$ is $\widetilde{\mu}_{x}(D)=\sum_{i=1}^g\widetilde{\mu}_{x_i}(D)$.
For a $2$-chain $D$ the Euler measure $D$ is $\frac{1}{2\pi}$ of the integral of the curvature of the metric of $\Sigma$ over $D$. It is equal to the $2$-cochain that assigns $\frac{1}{2}(2-n)$ to a $2n$-gon region.
For a domain $D \in \mathcal{D}(\bm x, \bm y)$ we assign its combinatorial index by
\[
\widetilde{\mu}(D)= \widetilde{\mu}_{\bm x}(D)+\widetilde{\mu}_{\bm y}(D)+e(D).
\]

\subsection{Transformations of Heegaard diagrams}
We introduce two types of transformations on a given Heegaard diagram $(\Sigma, \bm \alpha, \bm \beta)$ that are used in this paper.
\begin{defn}
Let $\gamma$ be an oriented path in $\Sigma$ which is transverse to $\bm \alpha$ and $\bm \beta$  and is disjoint from $\bm \alpha \cap \bm \beta$; see Figure~\ref{fm}. Also assume the endpoints $p_0$ and $p_1$ of $\gamma$ do not belong to $\bm \alpha \cup \bm \beta$. Let $U$ be a small neighborhood of $\gamma$. Let $\psi$ be an isotopy of $\Sigma$ supported on a small neighborhood of $\gamma$ which moves $\bm \alpha$ curves in the direction of $\gamma$ as given in Figure~\ref{fm}. The \emph{finger move} on the $\bm \alpha$ curves along the curve $\gamma$ is a restriction $\psi|_\alpha$; this does not move the $\bm \beta$ curves. Analogously we define the \emph{finger move on the $\bm \beta$ curves}.
\end{defn}

A transition from a Heegaard diagram $(\Sigma, \bm \alpha, \bm \beta)$ to the diagram $(\Sigma, \psi(\bm \alpha), \bm \beta)$ will be called by \emph{performing a finger move on $\bm \alpha$ along $\gamma$}. Finger moves were introduced in \cite{SarWang2010}.

\begin{defn}
Given a domain $D$, its \emph{image} $D'=\psi(D)$ under a finger move $\psi$ is defined as follows: first decompose a finger move $\psi$ into a sequence $\psi= \psi_1 \circ \ldots \circ \psi_m$ of finger moves where under any $\psi_i$ only two new points of intersection between $\bm \alpha$ and $\bm \beta$ appear (see Figure~\ref{fmdomain}). Let $R_1$, $R_2$ and $R_3$ be regions at which $\psi_1 \circ \ldots \circ \psi_{i-1} (D)$ has coefficients $a, b$ and $c$, respectively, and let the finger move $\psi_i$ be as shown in the picture. Then for new regions $R_1', R_2', R_3', R_4'$ and $R_5'$ shown in Figure~\ref{fmdomain} the coefficients are $a, b, c, b$ and $a+c-b$ as shown. Repeating this procedure gives us $\psi(D)$.
\end{defn}

\begin{figure}
    \centering
    \subfloat{{\includegraphics[width=4.5cm]{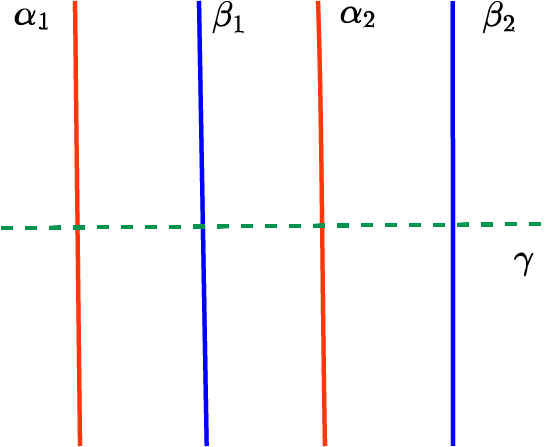} }}
    \qquad
    \subfloat{{\includegraphics[width=4.5cm]{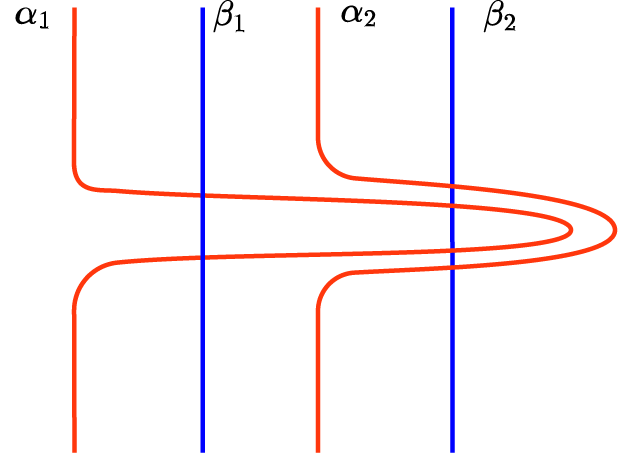} }}
    \caption{\bf Finger move}
    \label{fm}
    \subfloat{{\includegraphics[width=3.5cm]{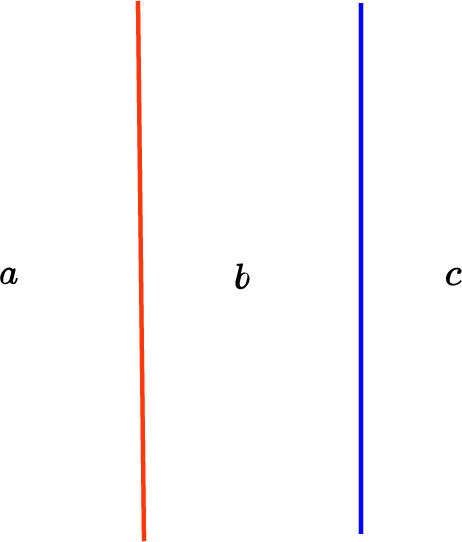} }}
    \qquad
    \subfloat{{\includegraphics[width=6cm]{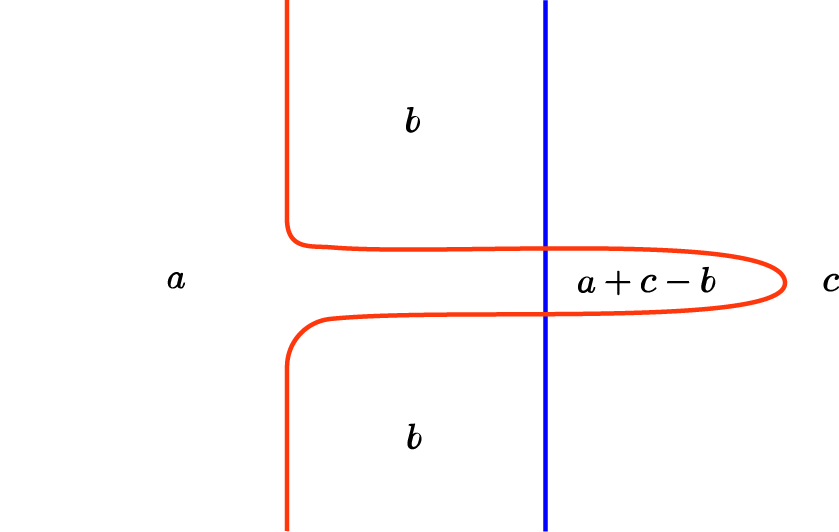} }}
    \caption{\bf Image of a domain.}
    \label{fmdomain}
\end{figure}

\begin{defn}
Let $D$ be a domain of a Heegaard diagram $(\Sigma, \bm \alpha, \bm \beta)$. We call an \emph{empty stabilization} of this Heegard diagram with respect to $D$ a stabilization which is obtained by taking the connected sum with the standard genus $1$ Heegaard diagram for $S^3$ where the attaching disk belongs to a region of $(\Sigma, \bm \alpha, \bm \beta)$ where the coefficient of $D$ is equal to $0$. The new Heegaard diagram $(\Sigma', \bm \alpha', \bm \beta')$ is also called an \emph{empty stabilization} of $(\Sigma, \bm \alpha, \bm \beta)$. The image of $D$ under an empty stabilization is $D$ itself in the new diagram.
\end{defn}

If we are given a Whitney disk $\varphi \in \pi_2(\bm x, \bm y)$ we may define its \emph{image} $\varphi'$ under a finger move by isotoping $\varphi$ with the accordance to the induced isotopy of $T_{\bm \alpha}$ inside $Sym^g(\Sigma)$.

We say that the stabilization is empty with respect to a Whitney disk $\varphi$ if it is empty with respect to $D(\varphi)$.
For the empty stabilization with respect to $\varphi$ the image of $\varphi$ is just $\varphi \times z$ where $z$ is the intersection point of added $\alpha_{g+1}$ and $\beta_{g+1}$.

We will extensively use the invariance of the Maslov index and of $\mu$ under these two types of transformations of a Heegaard diagram which is shown in the lemma below.

\begin{lemma}
Let $D \in \mathcal{D}(\bm x, \bm y)$ be a domain in Heegaard diagram $(\Sigma, \bm \alpha, \bm \beta)$ and let $\varphi \in \pi_2(\bm x, \bm y)$ be a Whitney disk. Let $D'$ be the image of $D$ under a finger move or an empty stabilization with respect to $D$. Let $\varphi'$ be the image of $\varphi$ under a finger move or an empty stabilization with respect to $\varphi$. Then $\mu(\varphi)=\mu(\varphi')$ and $\widetilde{\mu}(D)=\widetilde{\mu}(D')$. 
\end{lemma}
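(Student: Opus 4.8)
The plan is to treat the two assertions $\mu(\varphi)=\mu(\varphi')$ and $\widetilde{\mu}(D)=\widetilde{\mu}(D')$ separately, and within each to handle the finger move and the empty stabilization in turn.

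For the Maslov index both cases are soft. A finger move on $\bm \alpha$ is realized by an ambient isotopy $\psi$ of $\Sigma$ supported near $\gamma$, which induces an isotopy of $T_{\bm \alpha}$ inside $Sym^g(\Sigma)$ and carries $\varphi$ to $\varphi'$. Since $\mu$ depends only on the homotopy class of the loop of totally real subspaces $[\gamma_{\bm x}\circ \gamma_0\circ \gamma_{\bm y}\circ \gamma_1]$ in $Gr_g$, and this loop varies continuously under the isotopy, $\mu(\varphi)=\mu(\varphi')$. For an empty stabilization $\varphi'=\varphi\times z$, so along the disk the pulled-back tangent bundle splits as $\C^g\oplus\C$ and the associated loop in $Gr_{g+1}$ is the direct sum of the loop for $\varphi$ with the constant loop determined by the single transverse point $z$. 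As the Maslov index is additive under direct sums and the constant loop contributes $0$, we get $\mu(\varphi')=\mu(\varphi)$.

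For the combinatorial index the empty stabilization is immediate: $D$ is unchanged as a $2$-chain, every newly created region has coefficient $0$ and hence contributes nothing to $e(D)$, and the new coordinate $z$ lies at a point all four of whose surrounding regions carry coefficient $0$, so $\widetilde{\mu}_z(D)=0$. For the finger move I would first decompose it into elementary steps introducing a single pair of new intersection points $p,q$, and choose the supporting neighborhood $U$ of $\gamma$ disjoint from $\bm \alpha\cap\bm \beta$, which is possible since $\gamma$ avoids these points. Then no coordinate of $\bm x$ or $\bm y$ moves, and because the three old regions retain their coefficients $a,b,c$ while the two genuinely new regions lie inside $U$, the four regions surrounding every coordinate of $\bm x$ and $\bm y$ are unchanged; hence $\widetilde{\mu}_{\bm x}$ and $\widetilde{\mu}_{\bm y}$ are unaffected, and it remains only to show $e(D')=e(D)$.

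Writing $e(R)=\chi(R)-\tfrac14 c(R)$ with $c(R)$ the number of $90^\circ$ corners, I would read off from the local model that the five regions after the move carry coefficients $a,b,c,b,a+c-b$, the tip being a bigon of coefficient $a+c-b$. At each of $p$ and $q$ the four surrounding regions have coefficients summing to $2a+2c$, so $\tfrac14\sum_R a(R)c(R)$ increases by $a+c$; meanwhile severing the coefficient-$b$ region and adjoining the coefficient-$(a+c-b)$ bigon raises $\sum_R a(R)\chi(R)$ by $b+(a+c-b)=a+c$. Since $e=\chi-\tfrac14 c$, these two changes cancel, giving $e(D')=e(D)$ and thus $\widetilde{\mu}(D')=\widetilde{\mu}(D)$. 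I expect this finger-move computation to be the main obstacle: one must fix the local combinatorial picture precisely, verify that the prescribed coefficients $a,b,c,b,a+c-b$ are exactly those making $D'$ a domain via the jump conditions across the $\bm \alpha$- and $\bm \beta$-edges, and correctly account for the change in Euler characteristic when the coefficient-$b$ region is cut by the advancing finger, using that cutting a region along a properly embedded arc raises its total Euler characteristic by $1$. The remaining bookkeeping is routine once the local picture is pinned down.
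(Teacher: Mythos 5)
Your proposal is correct and follows essentially the same route as the paper: the same isotopy argument for $\mu$ under finger moves, the same direct-sum-with-$T_z\alpha_{g+1}$ and $T_z\beta_{g+1}$ argument for $\mu$ under empty stabilization, and the same observation $\widetilde{\mu}_z(D)=0$ for $\widetilde{\mu}$ under stabilization. The only real difference is that where the paper dismisses the finger-move invariance of $\widetilde{\mu}$ as ``immediate from the formula'' (citing Sarkar), you actually carry out the computation, and it checks out: in the elementary local model with coefficients $a,b,c,b,a+c-b$, both $\sum_R a(R)\chi(R)$ and $\tfrac14\sum_R a(R)c(R)$ increase by $a+c$ (the former because cutting the coefficient-$b$ region along a properly embedded arc raises $\chi$ by $1$ while the new bigon contributes $a+c-b$, the latter because the four quadrants at each of the two new intersection points have coefficients summing to $2a+2c$), so $e$, and hence $\widetilde{\mu}$, is unchanged.
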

\begin{proof}
First, consider the case when this transformation is an empty stabilization with respect to $D$ (or $\varphi)$. Let $\alpha_{g+1}$ and $\beta_{g+1}$ be two new curves of $(\Sigma, \bm \alpha ', \bm \beta ')$ and $z$ be their point of intersection. We have $\bm x'= \bm x \cup \{z\}$, $\bm y'= \bm y \cup \{z\}$ and $D'=D \in \mathcal{D}(\bm x', \bm y')$ (or $\varphi' \in \pi_2(\bm x, \bm y))$. First, $\widetilde{\mu}(D)=\widetilde{\mu}(D')$, because $\widetilde{\mu}_z(D)=0$.
As for the Maslov index, notice that paths $\gamma_0$ and $\gamma_1$ in $Gr_g$ rise to paths $\gamma_0'$ and $\gamma_1'$ in $Gr_{g+1}$ obtained by taking direct sum with one-dimensional real subspaces of $\R^{2(g+1)}=\C^{g+1}$ corresponding to $T_z \alpha_{g+1}$ and $T_z \beta_{g+1}$ respectively. Hence $\mu(\varphi)=\mu(\varphi')$.

If the transformation is a finger move then the Maslov indices are equal because finger move keeps $\bm \alpha$ and $\bm \beta$ unchanged near $\bm x$ and $\bm y$. Hence, inspecting the definition of Maslov indices we see that only the paths in $Gr_g$ coming from the boundary $\partial D^2$ differ, and they are isotopic to the initial ones, so the class in $H_1(Gr_g)$ does not change. 
Finally, $\widetilde{\mu}(D)=\widetilde{\mu}(D')$ is immediate from the formula (see also \cite[Theorem 3.4]{Sar2011}).
\end{proof}

\subsection{Additivity of indices}
 Our strategy for proving Lipshitz's formula is based on decomposing any domain into the composition of trivial pieces: bigons and rectangles.
 To apply this decomposition we need the additivity of the Maslov index $\mu$ and of the combinatorial index $\widetilde{\mu}$ under the composition of domains.
 We know that the Maslov index $\mu$ is additive. It was shown in more generality in \cite{Sar2011} that $\widetilde{\mu}$ is additive and here we repeat the proof in our terms.

\begin{defn}
Let $\gamma_1$ and $\gamma_2$ be oriented $1$-chains in $\Sigma$ intersecting transversely. Denote by $\gamma_1 \cdot \gamma_2$ intersection number of these $1$-chains, which is equal to the signed count of intersection points where the sign is defined by comparing the orientation of $\Sigma$ and orientation coming from $\gamma_1$ and $\gamma_2$. A contribution to intersection number at an endpoint is given by a fraction $\pm \frac{1}{2}$ or $\pm \frac{1}{4}$  as in \cite{Sar2011}.
\end{defn}

\begin{lemma}\label{add}
Let $D \in \pi_2(\bm x, \bm y)$ and $D'$ be any other $2$-chain in the Heegaard diagram $(\Sigma, \bm \alpha, \bm \beta)$. Then
$\widetilde{\mu}_{\bm x}(D')-\widetilde{\mu}_{\bm y}(D')= \partial D'|_{\bm \alpha} \cdot \partial D|_{\bm \beta}=\partial D'|_{\bm \beta} \cdot \partial D|_{\bm \alpha}$.
\end{lemma}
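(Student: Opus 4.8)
The plan is to reduce both sides to a sum of purely local contributions at the points of $\bm\alpha\cap\bm\beta$, and then to match them via a discrete summation-by-parts carried out around each $\beta$-curve (and, for the second equality, around each $\alpha$-curve). First I would fix local coordinates at each intersection point $q\in\bm\alpha\cap\bm\beta$, with the four adjacent regions carrying coefficients $n_1,\dots,n_4$ of $D'$ and $d_1,\dots,d_4$ of $D$. Writing $\partial D'|_{\bm\alpha}$ and $\partial D|_{\bm\beta}$ near $q$ in terms of these coefficients (each edge-multiplicity being the difference of the two neighbouring region coefficients), I would decompose each of the two $1$-chains at $q$ into a part passing straight through $q$ and a part terminating at $q$. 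Expanding the bilinear intersection pairing over this decomposition and invoking the $\pm\tfrac12,\pm\tfrac14$ endpoint rules from the definition, I expect the clean local formula
\[
I_q=\tfrac14\,(e_q+w_q)(N_q+S_q),
\]
where $e_q,w_q$ are the two multiplicities of $\partial D'|_{\bm\alpha}$ at $q$ and $N_q,S_q$ those of $\partial D|_{\bm\beta}$, so that $\partial D'|_{\bm\alpha}\cdot\partial D|_{\bm\beta}=\sum_q I_q$.

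Next I would introduce, on each edge of a $\beta$-curve, the half-measure $m$ equal to the average of the two $D'$-coefficients on either side of that edge. Two observations drive the computation: crossing $q$ along the $\beta$-curve changes $m$ by exactly $\tfrac12(e_q+w_q)$, and the point measure satisfies $\widetilde{\mu}_q(D')=\tfrac12(m_{\mathrm{before}}+m_{\mathrm{after}})$; meanwhile the difference of the two adjacent $\partial D|_{\bm\beta}$-multiplicities $N_q,S_q$ equals $\partial(\partial D|_{\bm\beta})|_q$, which by hypothesis is $+1$ at points of $\bm x$, $-1$ at points of $\bm y$, and $0$ elsewhere. Substituting these into $\sum_q I_q$ and summing around each $\beta$-circle, the cyclic telescoping term (of the form $\sum_t(m_{t+1}b_{t+1}-m_tb_t)$, where $b_t$ is the $\partial D|_{\bm\beta}$-multiplicity on the $t$-th edge) cancels, leaving
\[
\sum_q \bigl(\partial(\partial D|_{\bm\beta})|_q\bigr)\,\widetilde{\mu}_q(D')=\widetilde{\mu}_{\bm x}(D')-\widetilde{\mu}_{\bm y}(D').
\]
This yields the first equality; note it is precisely the hypothesis that $D$ connects $\bm x$ and $\bm y$, recorded in $\partial(\partial D|_{\bm\beta})=\bm y-\bm x$ on $\bm\alpha$ and $\bm x-\bm y$ on $\bm\beta$, that selects the two distinguished tuples out of the full sum over $\bm\alpha\cap\bm\beta$.

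For the second equality I would run the identical argument with the roles of $\bm\alpha$ and $\bm\beta$ interchanged, telescoping around the $\alpha$-circles and using $\partial(\partial D|_{\bm\alpha})=\bm y-\bm x$. Here the order of the two factors in the pairing is reversed, contributing an extra sign, which is balanced against the opposite sign of this boundary relation; in the paper's convention the $\bm\alpha$- and $\bm\beta$-curves are oriented, and it is exactly this orientation data, entering both the definition of the restricted boundaries and of the intersection sign, that makes $\partial D'|_{\bm\beta}\cdot\partial D|_{\bm\alpha}$ reduce to the same point-measure difference rather than to its negative. The main obstacle throughout is the sign and endpoint bookkeeping in the local formula for $I_q$: arranging the $\pm\tfrac12,\pm\tfrac14$ contributions and the crossing signs to be consistent with the chosen orientations of the curves and of $\Sigma$ is the delicate step, and it is where the two equalities are ultimately seen to coincide.
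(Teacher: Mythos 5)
Your proof is correct, but it is organized genuinely differently from the paper's. The paper proves the identity by a transport argument along $\partial D$ itself: starting in a quadrant at a point of $\bm x$, one travels parallel to the $\alpha$-arcs of $\partial D$, and each crossing of a $\bm\beta$-curve changes the coefficient of $D'$ by the corresponding multiplicity of $\partial D'|_{\bm\beta}$; the total change from $\bm x$ to $\bm y$ is therefore $\partial D|_{\bm\alpha}\cdot\partial D'|_{\bm\beta}$, and averaging over the four quadrants at each endpoint converts coefficients into point measures. You instead expand the pairing into local contributions $I_q=\tfrac{1}{4}(e_q+w_q)(N_q+S_q)$ at every intersection point and perform a cyclic summation by parts around each entire $\beta$-circle, with the hypothesis $\partial(\partial D|_{\bm\beta})=\bm x-\bm y$ appearing precisely as the coefficient of the non-telescoping term. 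Both arguments rest on the same discrete-Stokes mechanism (an edge multiplicity of a restricted boundary is the jump of region coefficients across that edge), but yours trades the paper's choice of paths and endpoint averaging for a choice-free algebraic rearrangement over closed loops: there are no endpoint corrections, the hypothesis on $D$ enters in a single visible place, and the two stated equalities are handled symmetrically under $\bm\alpha\leftrightarrow\bm\beta$. The cost is the local sign bookkeeping you flag as delicate: one must verify that, with all multiplicities measured along the direction of travel around the $\beta$-circle, the uniform identity $I_q=\tfrac{1}{2}(m_{\mathrm{after}}-m_{\mathrm{before}})(b_{\mathrm{before}}+b_{\mathrm{after}})$ holds at both positive and negative crossings of $\bm\alpha$ with $\bm\beta$; this checks out (the crossing sign and the orientation-dependent formulas for $e_q,w_q,N_q,S_q$ in terms of region coefficients flip together), and it is no worse than the sign conventions the paper's own proof leaves implicit.
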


\begin{proof}
Given orientations on $\bm \alpha$ at each intersection point $p$ between $\bm \alpha$ and $\bm \beta$ we give numbers to regions from $\mathit{I}$ to $\mathit{IV}$. Namely, $\mathit{I}$ and $\mathit{II}$ quadrants are in the upper half of $\bm \beta$ (the half in the positive direction of $\bm \alpha$) and $\mathit{I}$st quadrant is the one for which the orientation induced from $\Sigma$ is opposite to the orientation of $\bm \alpha$ at $p$. Quadrants $\mathit{III}$ and $\mathit{IV}$ are defined follow $\mathit{II}$ in the counterclockwise order on $\Sigma$.

Let us pick a point $p$ in the first quadrant near the point $x_i \in \bm x$ lying on some $\alpha_k$ and travel parallely to $\partial D|_{\bm \alpha_k}$ until we reach a first quadrant near $y_j \in \bm y$ where $\partial(\partial D|_{\bm \alpha_k})=y_j-x_i$. Then let us track the coefficient of $D'$ at each point along this path. Each time we change the region passing through $\bm \beta$ this coefficient changes by the intersection number of this small portion of our path with $\partial D'|_{\bm \beta}$. Hence, the difference between the coefficients is equal to $\partial D|_{\alpha_k} \cdot \partial D'|_{\bm \beta}$. 

Now, by taking such paths for all points in $\bm x$ and all $4$ quadrants for each of these points and then averaging we get
$\widetilde{\mu}_{\bm x}(D')-\widetilde{\mu}_{\bm y}(D')= \partial D|_{\bm \alpha} \cdot \partial D'|_{\bm \beta}$
\end{proof}

\begin{cor}
Let $D \in \pi_2(\bm x, \bm y)$ and $D' \in \pi_2(\bm y, \bm z)$. Then $\widetilde{\mu}(D*D')=\widetilde{\mu}(D)+\widetilde{\mu}(D')$.
\end{cor}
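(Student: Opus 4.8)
The plan is to use that both ingredients of the combinatorial index are \emph{linear} functionals of the underlying $2$-chain, so that the only thing obstructing additivity is a single cross term, and then to pin that cross term down with Lemma~\ref{add}. First I would record the elementary facts: as $2$-chains $D*D' = D+D'$; the Euler measure $e$ is linear in the $2$-chain (it is the evaluation of $D$ against the curvature $2$-cochain); and each point measure $\widetilde{\mu}_{\bm p}(\cdot)$ is an average of four region-coefficients, hence also linear in the $2$-chain. Expanding
\[
\widetilde{\mu}(D*D') = \widetilde{\mu}_{\bm x}(D+D') + \widetilde{\mu}_{\bm z}(D+D') + e(D+D')
\]
and subtracting $\widetilde{\mu}(D)+\widetilde{\mu}(D')$, both Euler measures and the ``diagonal'' point measures $\widetilde{\mu}_{\bm x}(D)$ and $\widetilde{\mu}_{\bm z}(D')$ cancel, and the whole statement collapses to the single cross-term identity
\[
\widetilde{\mu}_{\bm x}(D') - \widetilde{\mu}_{\bm y}(D') = \widetilde{\mu}_{\bm y}(D) - \widetilde{\mu}_{\bm z}(D).
\]

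Next I would feed each side of this identity into Lemma~\ref{add}. The left-hand side is the point-measure difference of the chain $D'$ taken across the endpoints of the reference domain $D \in \pi_2(\bm x, \bm y)$, so Lemma~\ref{add} (with $D$ as the reference) rewrites it as the intersection number $\partial D'|_{\bm\alpha} \cdot \partial D|_{\bm\beta}$. Symmetrically, the right-hand side is the point-measure difference of $D$ across the endpoints of the reference domain $D' \in \pi_2(\bm y, \bm z)$, so the same lemma (now with $D'$ as the reference) rewrites it as $\partial D|_{\bm\alpha} \cdot \partial D'|_{\bm\beta}$. Thus the corollary is reduced to the single equality of intersection numbers
\[
\partial D'|_{\bm\alpha} \cdot \partial D|_{\bm\beta} = \partial D|_{\bm\alpha} \cdot \partial D'|_{\bm\beta}.
\]

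The main obstacle is exactly this last equality, which is where the orientation conventions and the fractional $\pm\tfrac12,\pm\tfrac14$ endpoint contributions must be handled carefully: naive antisymmetry of the transverse intersection pairing at interior crossings would produce a competing sign, so the interior and boundary counts must be shown to line up \emph{simultaneously} rather than up to an overall sign. The intended route around this is the second equality already built into Lemma~\ref{add}, namely $\partial D'|_{\bm\alpha} \cdot \partial D|_{\bm\beta} = \partial D'|_{\bm\beta} \cdot \partial D|_{\bm\alpha}$, together with its counterpart $\partial D|_{\bm\alpha} \cdot \partial D'|_{\bm\beta} = \partial D|_{\bm\beta} \cdot \partial D'|_{\bm\alpha}$ for the reference domain $D'$; matching these two symmetric forms identifies the two intersection numbers and closes the argument. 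I expect the only genuinely delicate verification to be that the endpoint contributions at the shared generators $\bm x,\bm y,\bm z$ enter with consistent signs on both sides, so that the equality of intersection numbers holds on the nose.
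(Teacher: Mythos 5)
Your proposal is correct and follows essentially the same route as the paper's own proof: expand $\widetilde{\mu}(D*D')$ using linearity of the point measures and Euler measure, reduce to the cross-term identity $\widetilde{\mu}_{\bm x}(D')-\widetilde{\mu}_{\bm y}(D')=\widetilde{\mu}_{\bm y}(D)-\widetilde{\mu}_{\bm z}(D)$, and close it by applying Lemma~\ref{add} with $D$ and with $D'$ as the reference domain, using the second equality built into that lemma to identify the two intersection numbers. You are in fact more explicit than the paper about the order-of-pairing subtlety (the paper simply writes both cross terms as the same expression $\partial D|_{\bm\alpha}\cdot\partial D'|_{\bm\beta}$, implicitly treating the pairing of an $\bm\alpha$-supported chain with a $\bm\beta$-supported chain as independent of the order in which the two chains are written), but under that convention your matching step closes the argument exactly as the paper's does.
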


\begin{proof}

First, we easily see that
\[
\widetilde{\mu}(D*D')- \widetilde{\mu}(D)-\widetilde{\mu}(D')=\widetilde{\mu}_{\bm x}(D')+\widetilde{\mu}_{\bm z}(D)-\widetilde{\mu}_{\bm y}(D')-\widetilde{\mu}_{\bm y}(D).
\]

Second, applying Lemma~\ref{add} we get $\widetilde{\mu}_{\bm x}(D')-\widetilde{\mu}_{\bm y}(D')=\partial D|_{\bm \alpha} \cdot \partial D'|_{\bm \beta}$ and $\widetilde{\mu}_{\bm y}(D)-\widetilde{\mu}_{\bm z}(D)=\partial D|_{\bm \alpha} \cdot \partial D'|_{\bm \beta}$.

Hence
\[
\widetilde{\mu}(D*D')- \widetilde{\mu}(D)-\widetilde{\mu}(D')=0.
\]
\end{proof}

\section{Main theorem}\label{main}

\begin{thm}\label{decomp}
For a given domain $D$ in a Heegaard diagram $(\Sigma, \bm \alpha, \bm \beta)$ there is a sequence of finger moves and empty stabilizations such that in the new Heegaard diagram the image of $D$ can be represented as a composition of bigons, rectangles and their negatives.
\end{thm}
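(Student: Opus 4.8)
The plan is to argue by induction on a complexity measure of $D$, peeling off one bigon or rectangle at a time. Finger moves will be used to manufacture an innermost, peelable piece, and empty stabilizations to supply topological room when the $\bm\alpha$ and $\bm\beta$ curves are too constrained to admit the required finger move. Since both $\mu$ and $\widetilde{\mu}$ are additive (the corollary to Lemma~\ref{add}) and invariant under both transformations (the invariance lemma proved above), it suffices to realize $D$, in some transformed diagram, as a composition $P * D''$ where $P$ is a bigon or rectangle (or its negative) and $D''$ is a strictly simpler domain; iterating then terminates at the zero domain, which is the empty composition.

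The heart of the argument is the reduction step. I would choose a point $x_i \in \bm x$ and follow the arc of $\partial D|_{\bm\alpha}$ issuing from it until its first crossing with a $\bm\beta$ curve, then perform a finger move directed along this arc so that the relevant $\bm\beta$ curve is dragged across successive regions toward the neighboring corner of $\partial D$. As recorded in the definition of the image of a domain (Figure~\ref{fmdomain}), each elementary finger move replaces a triple of coefficients $a,b,c$ by $a,b,c,b,a+c-b$, and a sequence of such moves carves out a thin strip along the arc. Choosing the finger to terminate at the first corner should produce an embedded bigon or rectangle $P$ carrying no interior $\bm x$- or $\bm y$-coordinate, and splitting it off gives the decomposition $D = P * D''$ in the transformed diagram.

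Empty stabilizations enter in two ways. First, they separate coordinates: if a would-be finger path is blocked because the curves are homologically essential and cannot be isotoped freely within $\Sigma$, an empty stabilization performed in a zero-coefficient region of $D$ provides an auxiliary handle along which the finger can be routed without altering $D$ or its index. Second, they ensure that the peeled piece $P$ can always be taken to be an honest disk rather than a region wrapping around nontrivial topology of $\Sigma$.

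The main obstacle is controlling the complexity so that the induction genuinely terminates. The naive measure $\sum_R |a(R)|$ is not monotone under finger moves, since the new coefficient $a+c-b$ can exceed $a,b,c$ in absolute value; so I expect the crux to be either arranging each full reduction step (finger moves together with the peeling) to net a strict decrease, or replacing $\sum_R|a(R)|$ by a finer lexicographic quantity — for instance the pair consisting of the maximal absolute coefficient together with the total number of intersection-to-intersection arcs in $\partial D|_{\bm\alpha}$ — that is provably decreased by the peeling. Establishing that the split-off piece $P$ is embedded with no interior generator, so that it is a genuine bigon or rectangle, together with this termination bookkeeping, is where the real work lies; the additivity and invariance results quoted above then assemble the pieces into the global decomposition.
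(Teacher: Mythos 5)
Your proposal is a plan rather than a proof: the two places you yourself flag as ``where the real work lies'' are exactly the content of the theorem, and they are left unresolved. The paper does not find a single monotone complexity measure either; it sidesteps that difficulty by organizing the reduction into qualitatively different stages, each with its own evident induction parameter: first make $\partial D$ embedded (Step 1, splitting off bigons to lower the covering multiplicity of the $\bm \alpha$-arcs), then make $\partial D$ connected (Step 2, induction on the number of boundary components), then reduce the number of sides of the boundary two at a time (Step 3), then decompose an embedded domain with $2$- or $4$-sided boundary by induction on the number of handles it contains (Steps 4a and 4b, the ``palm tree'' finger moves), and finally remove generator points from the interiors of the resulting rectangles (Step 5, induction on the number of interior points). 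Your single-step scheme ``peel one piece, decrease one number'' would have to be replaced by a layered argument of this shape, precisely because, as you correctly observe, no naive count of coefficients decreases under the finger moves needed to create the peelable piece; flagging the problem is not the same as solving it.

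There is also a concrete case your peeling step cannot touch: it begins by following an arc of $\partial D|_{\bm \alpha}$ issuing from a point $x_i$, so it cannot even start on a domain with empty boundary, i.e.\ a periodic domain such as $k[\Sigma] \in \mathcal{D}(\bm x, \bm x)$. More generally you do not address the $H_2(\Sigma)$ ambiguity: domains with the same boundary differ by a multiple of $[\Sigma]$, and a given $D$ need not be representable by an embedded $2$-chain until such a multiple is added. The paper needs a separate argument (Claim~\ref{whole}) showing that $\Sigma$ itself decomposes into bigons and rectangles, and invokes it in Step 4a exactly to make the domain embedded before the handle-by-handle decomposition; it also needs the explicit constructions for carrying a domain around the handles of $\Sigma$, which your appeal to ``stabilizations for room'' gestures at but does not supply. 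So while your proposal identifies the right tools --- finger moves to manufacture peelable bigons and rectangles, empty stabilizations for topological room, additivity to reassemble --- the termination argument, the periodic-domain case, and the handle-removal construction are all missing, and these are the substance of the proof.
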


Before proving this theorem we show how Theorems~\ref{unique} and~\ref{Lipshitz} follow from Theorem~\ref{decomp}.

\begin{proof}[Proof of Theorem~\ref{unique}]
Let $D\in \mathcal{D}(\bm x, \bm y)$. Apply Theorem~\ref{decomp} to obtain the image $D'$ of $D$ and the decomposition $D'=D_1*\ldots*D_k$ of $D'$ in the new Heegaard diagram. Here each $D_i$ is either a bigon, a rectangle, or the negative of a bigon or a rectangle. Since $\overline{\mu}$ is additive we may compute $\overline{\mu}(D')=\overline{\mu}(D_1)+\ldots+\overline{\mu}(D_k)$. For each $D_i$ the value $\overline{\mu}(D_i)$ can be easily inferred from the $4$ axioms.
\end{proof}

\begin{proof}[Proof of Theorem~\ref{Lipshitz}]
We are given a Whitney disk $\varphi \in \pi_2(\bm x, \bm y)$ and we may assume that $g(\Sigma)>1$ by applying an empty stabilization if necessary. By Theorem~\ref{decomp} there is a transformation of the given Heegaard diagram such that there is a decomposition $D(\varphi')=D_1*\ldots*D_k$, where $\varphi'$ is an image of $\varphi$ under this transformation; equivalently $0=D(\varphi')*(-D_1)*\ldots*(-D_k)$. Since each $D_i$ is either a bigon or a rectangle (possibly negative) there exists a corresponding Whitney disk $\varphi_i$ such that $D(\varphi_i)=-D_i$. Then $\varphi'*\varphi_1*\ldots*\varphi_k \in \pi_2(\bm x,\bm x)$
and, moreover, $D(\varphi'*\varphi_1*\ldots*\varphi_k)=0$. From \cite[Proposition 2.15]{OzSz2004b} it now follows that $\varphi'*\varphi_1*\ldots*\varphi_k=0\in \pi_2(\bm x, \bm x)$, so $\mu(\varphi'*\varphi_1*\ldots*\varphi_k)=0$. Hence, $\mu(\varphi')=-(\mu(\varphi_1)+\ldots+\mu(\varphi_k))$.

It now suffices to show that $\mu(B)=\widetilde{\mu}(B)=1$ and $\mu(R)=\widetilde{\mu}(R)=1$ for any bigon $B$ and any rectangle $R$. For a bigon $\mu(B)=1$ as it agrees with the dimension of the (regular) moduli space of biholomorphisms from $B$ to the strip $[0, 1] \times \R$. The fact that $\mu(R)=1$ is shown in \cite[Section 8.4]{OzSz2004b}. Finally, $\widetilde{\mu}(R)=\widetilde{\mu}(B)=1$ by direct computation.
\end{proof}

Now we prove the main theorem. 

\begin{proof}[Proof of Theorem~\ref{decomp}]
In the proof we abuse notation and denote the image of a domain $D$ under any empty stabilization or a finger move by same letter $D$ .

We break our proof into several steps starting with a general domain in Step 1 and simplifying it gradually by transformations of Heegaard diagrams. Before we start dealing with general domains we need additional preparation which we perform at Step 0.

{\bf Step 0. Quadrilaterals.}

\begin{figure}
    \centering
    \subfloat{{\includegraphics[width=3.5cm]{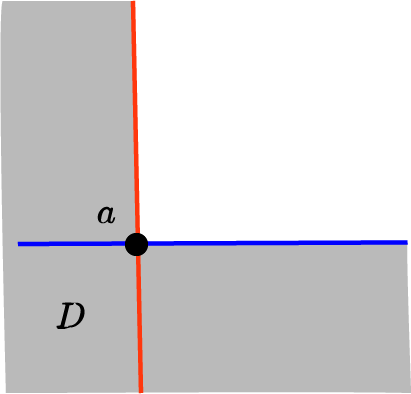} }}
    \qquad
    \subfloat{{\includegraphics[width=3.5cm]{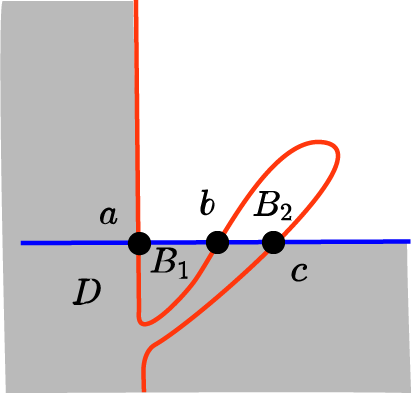} }}
    \caption{\bf $270^{\circ}$ angle}
    \label{270}
\end{figure}
Before considering the most general domains we need to take care of another type of ``building block". Namely, let $D$ be an embedded domain with non-negative coefficients which is homeomorphic to a disk and has a boundary consisting of four sides. We would call such $D$ a \emph{quadrilateral}. If all angles of $D$ are $90^\circ$ then it is already a rectangle. Otherwise, an angle at one of its vertices $a$ is $270^\circ$.  Consider a finger move shown in Figure~\ref{270} which creates two bigons $B_1 \in \pi_2(a,b)$ and $B_2 \in \pi_2(b,c)$. Then $D*(-B_1)$ is a quadrilateral with one more $90^\circ$ angle than $D$. Proceeding in the same fashion we decompose $D$ into a rectangle and several bigons.

Henceforth, in the following steps it is enough to decompose any domain into quadrilaterals and bigons with arbitrary angles.

\smallskip
{\bf Step 1. Making the boundary embedded.}

First, we show that after several finger moves we may compose $D$ with some bigons (or their negatives) and obtain a domain $D'$ with embedded boundary.

\begin{figure}
    \centering
    \subfloat{{\includegraphics[width=7cm]{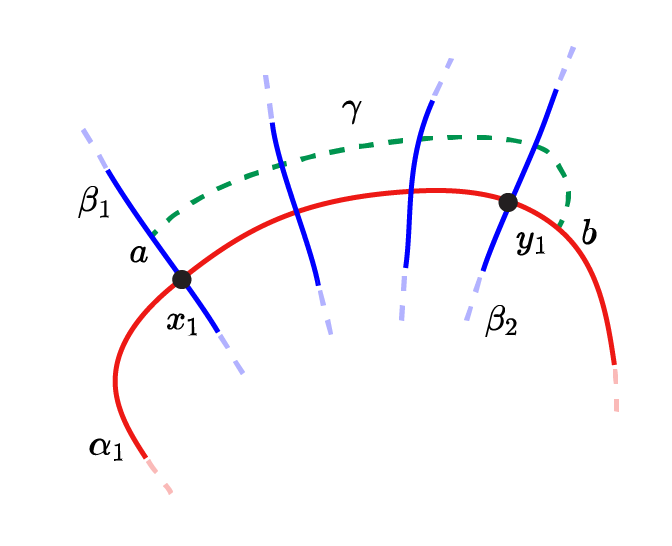} }}
    \qquad
    \subfloat{{\includegraphics[width=7cm]{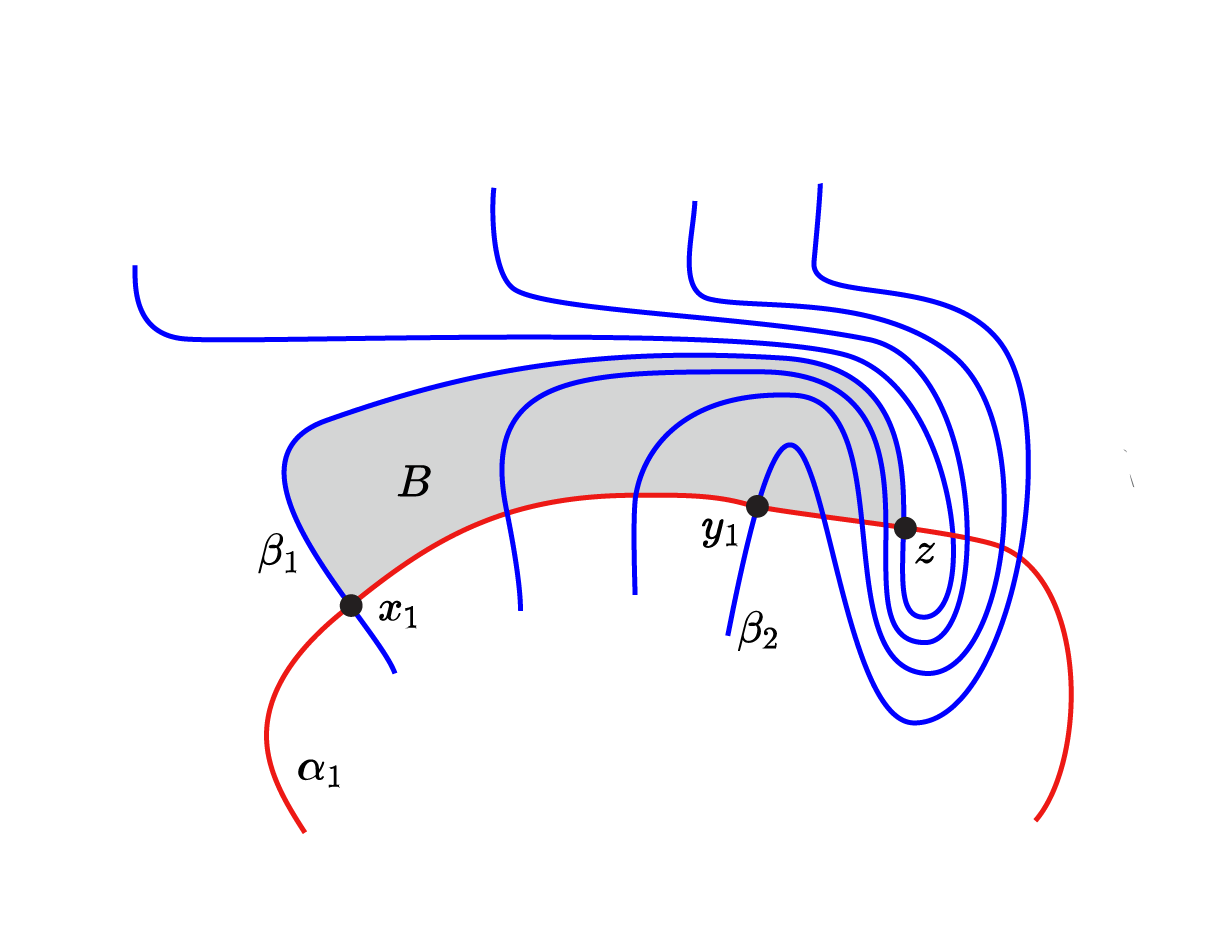} }}
    \caption{\bf Making the boundary embedded}
    \label{embed}
\end{figure}

Assume without loss of generality that $\partial(\partial D|_{\alpha_1})=y_1-x_1$ with respect to the chosen orientation on $\alpha_1$. Here $x_1$ lies on $\beta_1$ and $y_1$ lies on $\beta_2$ (where $\beta_2$ may be equal to $\beta_1$). We may assume that the positively oriented embedded arc from $x_1$ to $y_1$ in $\alpha_1$ is covered $k+1$ times by $\partial D$ and the positively oriented embedded arc from $y_1$ to $x_1$ is covered $k$ times. For now assume $k>0$. 

Since the $\bm \alpha$ and $\bm \beta$ curves are oriented, for any region we may distinguish whether it lies to the left of a given $\bm \alpha$ or $\bm \beta$ curve. Let $a$ be a point very close to $x_1$ on $\beta_1$ to the left of $\alpha_1$ (see Figure~\ref{embed}). Also let $b$ be a point on $\alpha_1$ near $y_1$ not belonging to the positive arc from $x_1$ to $y_1$. Let $\gamma$ be (a slight extension of) a curve starting at $a$ and parallel to $\alpha_1$ until it hits $\beta_2$ and then we connect it with $b$. We make a finger move on $\bm \beta$ curves along the curve $\gamma$ creating new points of intersection of $\bm \beta$ curves intersecting $\gamma$ (including $\beta_1$ and $\beta_2$ ) with $\alpha_1$. Let $z$ be the new intersection point of $\beta_1$ and $\alpha_1$ which is the closest to $x_1$ on $\beta_1$.

As a result, we created a bigon $B$ connecting $x_1$ to $z$. Then  $D'=D*(-B) \in \mathcal{D}(\{z, x_2, \dots, x_g\}, \{y_1, y_2, \ldots, y_g\})$. Note that in $\partial D'$ the arc from $z$ to $y_1$ is covered $k$ times and the arc from $y_1$ to $z$ is covered $(k-1)$ times.

In the case $k<0$ we would draw $\gamma$ parallel to the negative embedded arc from $x_1$ to $y_1$ and proceed analogously. Repeating this procedure we first make sure that $\partial D \cap \bm \alpha$ is embedded and then we repeat it for $\bm \beta$ and end up with a domain having an embedded boundary.


\smallskip
{\bf Step 2. Making the boundary connected.}

\begin{figure}
    \centering
    \subfloat{{\includegraphics[width=7cm]{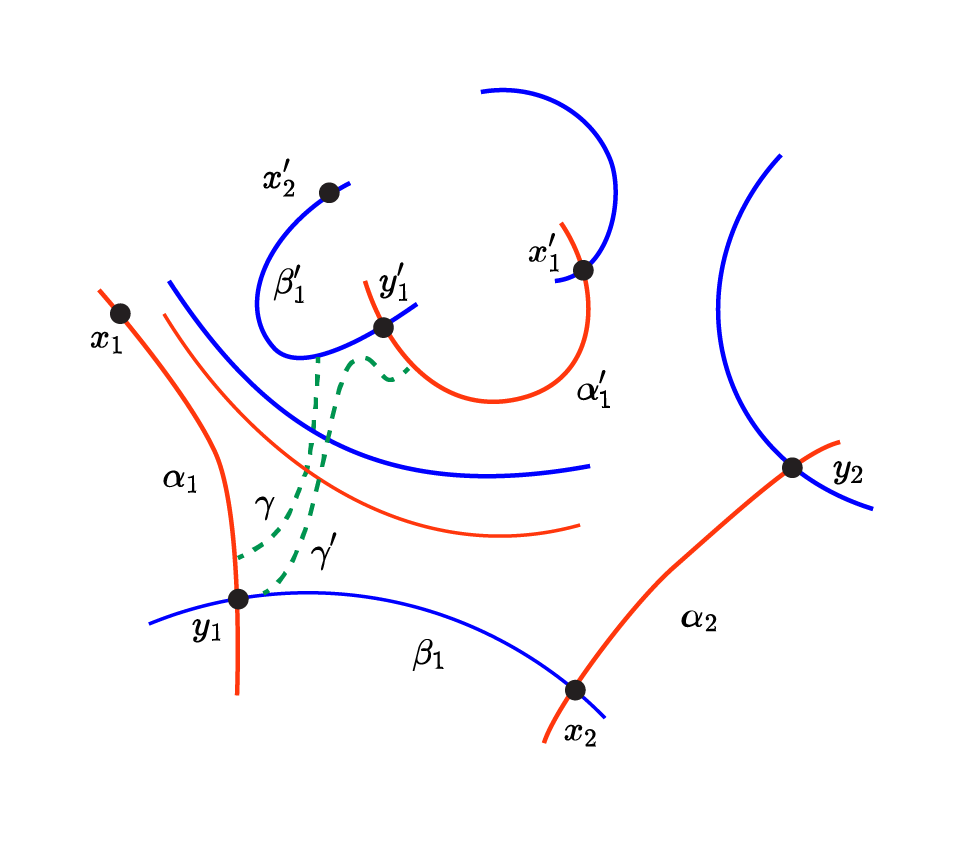} }}
    \qquad
    \subfloat{{\includegraphics[width=7cm]{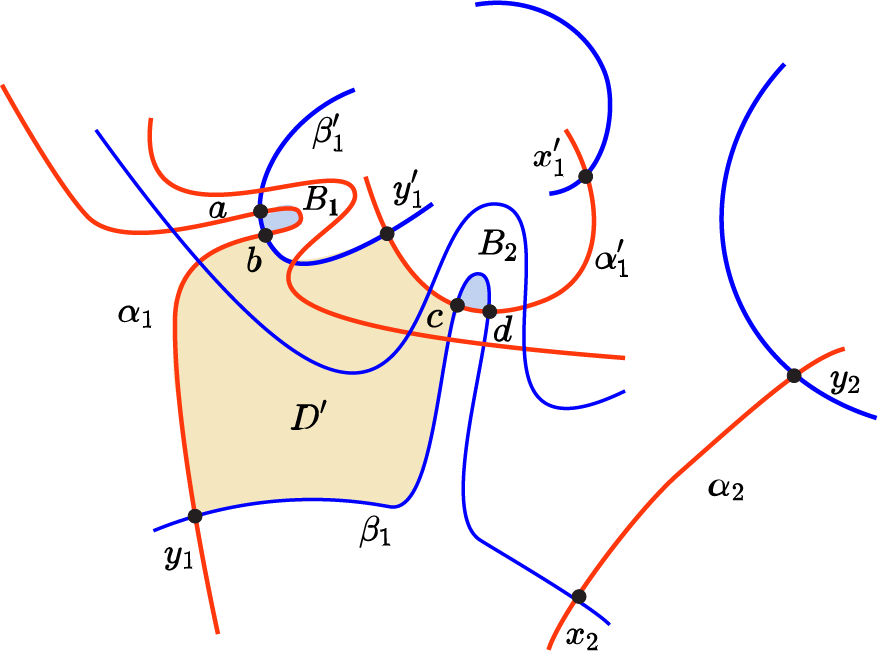} }}
    \caption{\bf Making boundary connected}
    \label{connected}
\end{figure}
Assume the boundary of a domain $D$ consists of more than one closed curve as shown in Figure~\ref{connected}. 
Suppose $\partial D \cap (\alpha_1 \cap \beta_1)=y_1$, $\partial D \cap (\beta_1 \cap \alpha_2)=x_2$ and $\partial(\partial D|_{\alpha_2})=y_2-x_2$. Let us pick a point on $\alpha_1$ close to $y_1$ and let $\gamma$ be an arc connecting it to a point on the restriction of some other component of $\partial D$ to $\bm \beta$ such that $int(\gamma)$ does not intersect $\partial D$. We denote  by $\beta_1'$ the curve to which the endpoint of $\gamma$ belongs and $\partial(\partial D|_{\beta_1'})=x_2'-y_1'$ while $\partial(\partial D|_{\alpha_1'})=y_1'-x_1'$. We also can make sure that $\gamma$ intersects $\beta_1'$ at a point near $y_1'$. We may decompose $\gamma=\gamma_1\gamma_2\gamma_3$ where $\gamma_1$ and $\gamma_2$ are small portions near the ends and $\gamma_2$ is the remaining portion in the middle.

We draw a curve $\gamma_1'$ connecting a point near $y_1$ on $\beta_1$ with some point near the end of $\gamma_1$ such that $\gamma_1'$ does not intersect $\gamma_1$. Starting at the end of $\gamma_1'$, we draw a curve $\gamma_2'$ parallel to $\gamma_2$. Then we connect the end of $\gamma_2$ by a curve $\gamma_3$ parallel to $\beta_1'$ with a point on $\alpha_1'$ near $y_1'$. We set $\gamma'=\gamma_1'\gamma_2'\gamma_3'$.

Now we make a finger move on $\bm \alpha$ curves along $\gamma$ and we also make a finger move on $\bm \beta$ curves along $\gamma'$. Denote the new points of intersection between $\alpha_1$ and $\beta_1'$ by $a$ and $b$, and analogously denote the points of intersection between $\beta_1$ and $\alpha_1'$ by $c$ and $d$. 

Denote the new quadrilateral by $D'\in \mathcal{D}(\{b, c\},\{y_1, y_1'\})$ and the two new bigons by $B_1\in \mathcal{D}(\{a\},\{b\})$ and $B_2\in \mathcal{D}(\{c\},\{d\})$. 

Then consider 
\[ D*(-D')*B_1*B_2 \in \mathcal{D}(\bm x,  \{a,y_2',...,d,y_2,...\})
\]
whose boundary contains one fewer component than $D$. From here we may proceed inductively on the number of boundary components.
\begin{figure}
    \centering
    \subfloat{{\includegraphics[width=6cm]{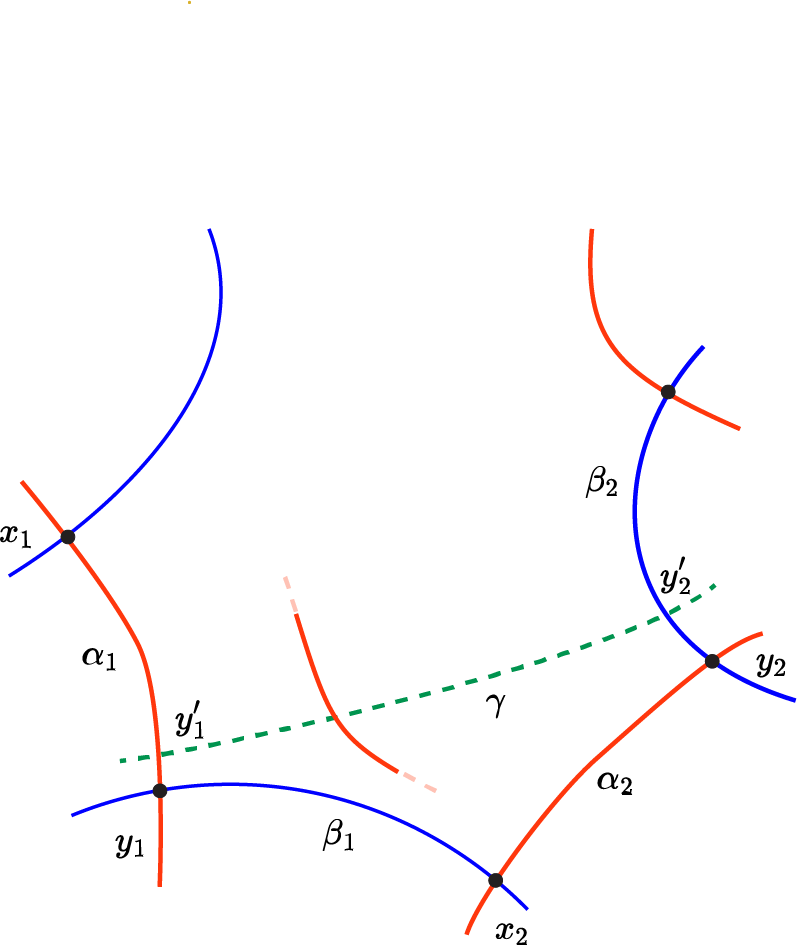} }}
    \qquad
    \subfloat{{\includegraphics[width=7cm]{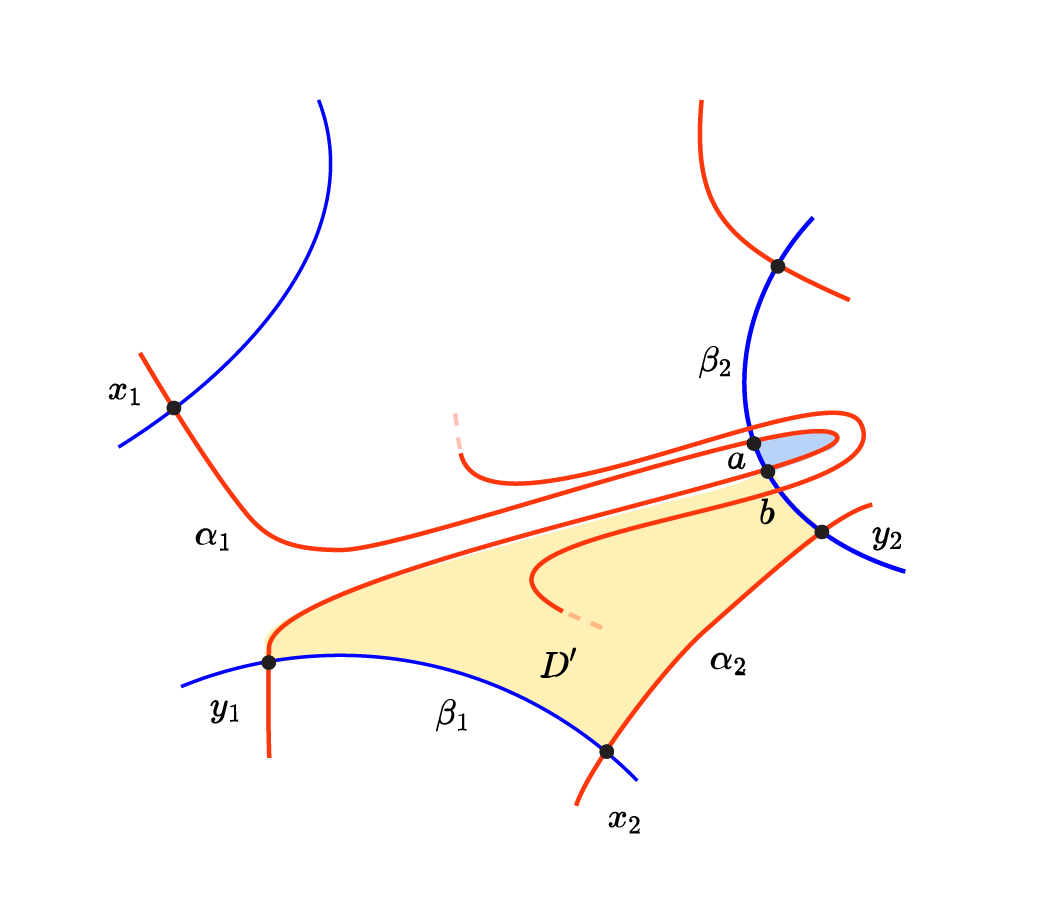} }}
    \caption{\bf Decreasing the number of sides}
    \label{2ngon}
\end{figure}

\smallskip
{\bf Step 3. Reducing to a quadrangle or a bigon boundary.}
Let $D$ be a domain and applying Steps 1 and 2 we assume that $\partial D$ is embedded and connected. 

Let the boundary of a domain $D$ be $2n$-sided, i.e. is of the form $x_1y_1x_2y_2x_3\ldots y_n$ with $n>2$ where $x_iy_i$ is an arc on $\bm \alpha$ and $y_ix_{i+1}$ is an arc on $\bm \beta$. Assume that the first $4$ sides lie on $\alpha_1$, $\beta_1$, $\alpha_2$ and $\beta_2$ (see Figure~\ref{2ngon}). Let us pick a point $y_1'$ on $x_1y_1$ near $y_1$ and a point $y_2'$ on $\beta_2$ near $y_2$. Let $\gamma$ be an arc connecting $y_1'$ and $y_2'$ such that $int(\gamma) \cap \partial D= \emptyset$ and the arcs $\gamma$, $y_1y_1'$, $y_1x_2$, $x_2y_2$ and $y_2y_2'$ bound an embedded disk. We make a finger move on $\bm \alpha$ curves along $\gamma$ and denote by $a$ and $b$ two new points of intersection between $\alpha_1$ and $\beta_2$. Notice that we created two new domains: a bigon $B \in \mathcal{D} (\{a\}, \{b\})$ and a quadrilateral $D'\in \mathcal{D} (\{x_2, b\}, \{y_1, y_2\})$. 

We may replace $D$ with a domain 
\[
D*(-D')*B \in \pi_2( \{x_1, x_2, \ldots, x_n\}, \{a, x_2, y_3, \ldots, y_n\}),\]
which has $2n-2$ sides. 
\begin{figure}
    \centering
    \subfloat{{\includegraphics[width=6cm]{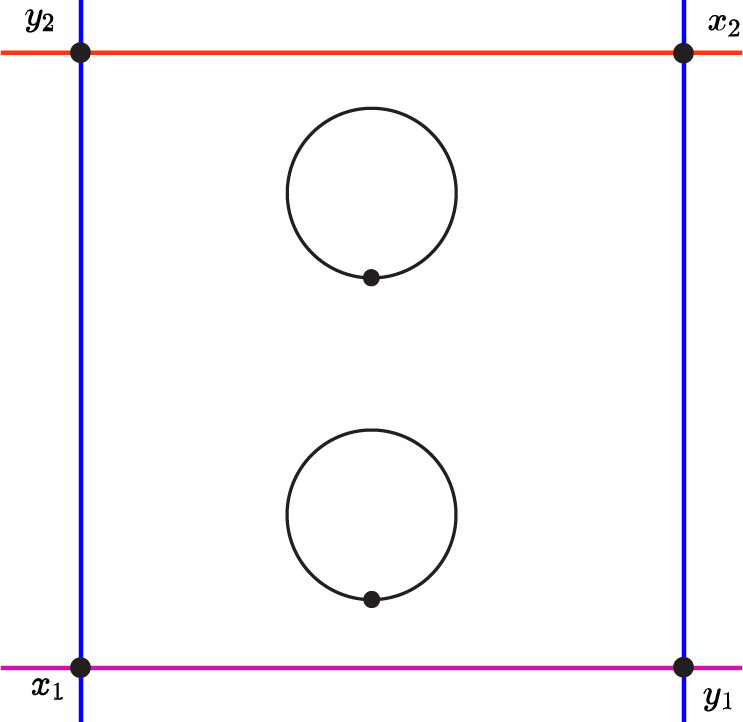} }}
    \qquad
    \subfloat{{\includegraphics[width=8cm]{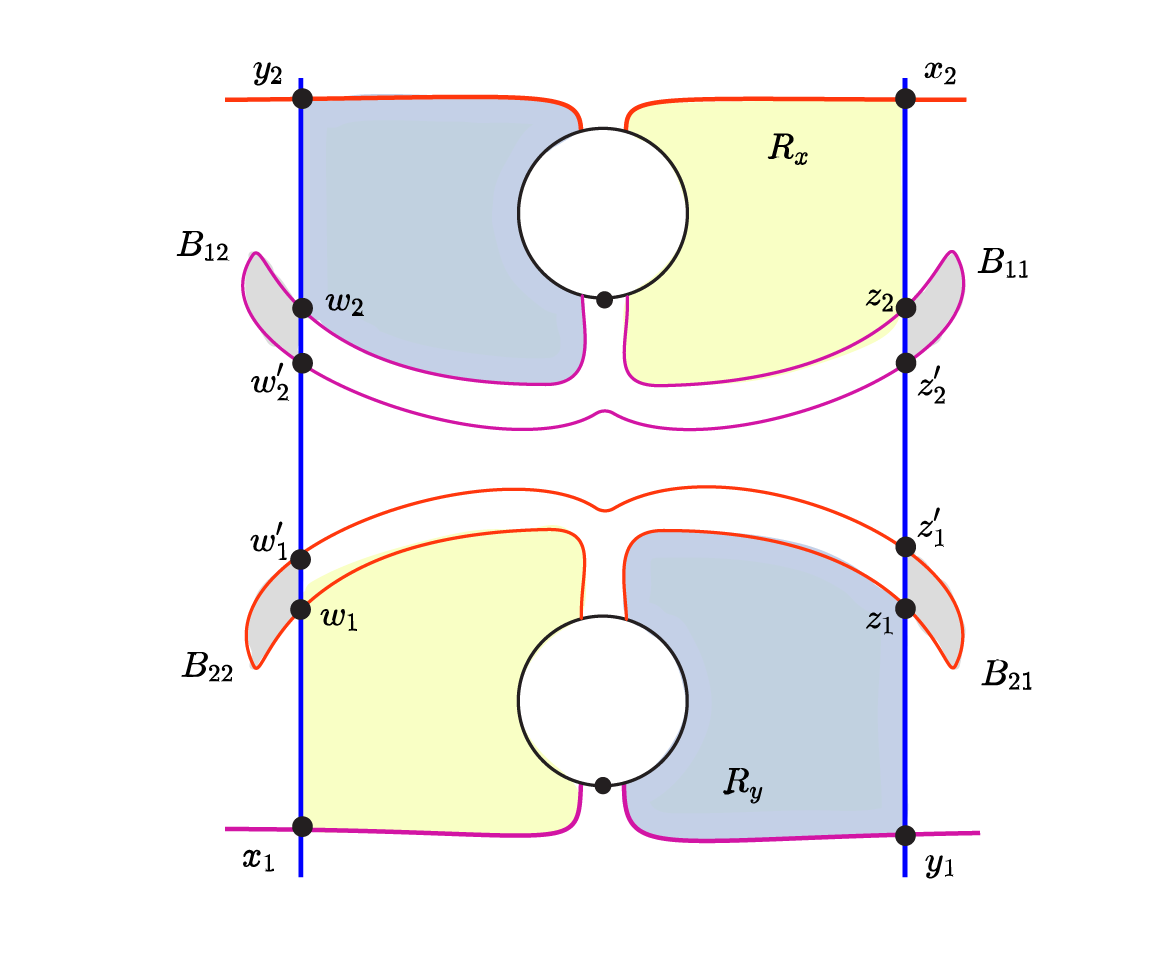} }}
    \caption{\bf 4-sided boundary.}
    \label{palms}
\end{figure}

\smallskip
{\bf Step 4a. 4-sided boundary.}

Given two domains which have the same 4-sided curve as the boundary, their difference is an element in $H_2(\Sigma)$ which is generated by $[\Sigma]$. We will prove in Claim~\ref{whole} that $\Sigma$ can be decomposed into bigons and rectangles after some finger moves and empty stabilizations. Therefore we may add $k[\Sigma]$ to a given domain $D$ to ensure that we get a domain represented by an embedded $2$-chain. This domain, which we also call $D$, is isotopic to a disk with $m$ handles.

We will now show how to decompose this handlebody $D$ into bigons and quadrilaterals.

\begin{figure}
    \centering
    \subfloat{{\includegraphics[width=8.5cm]{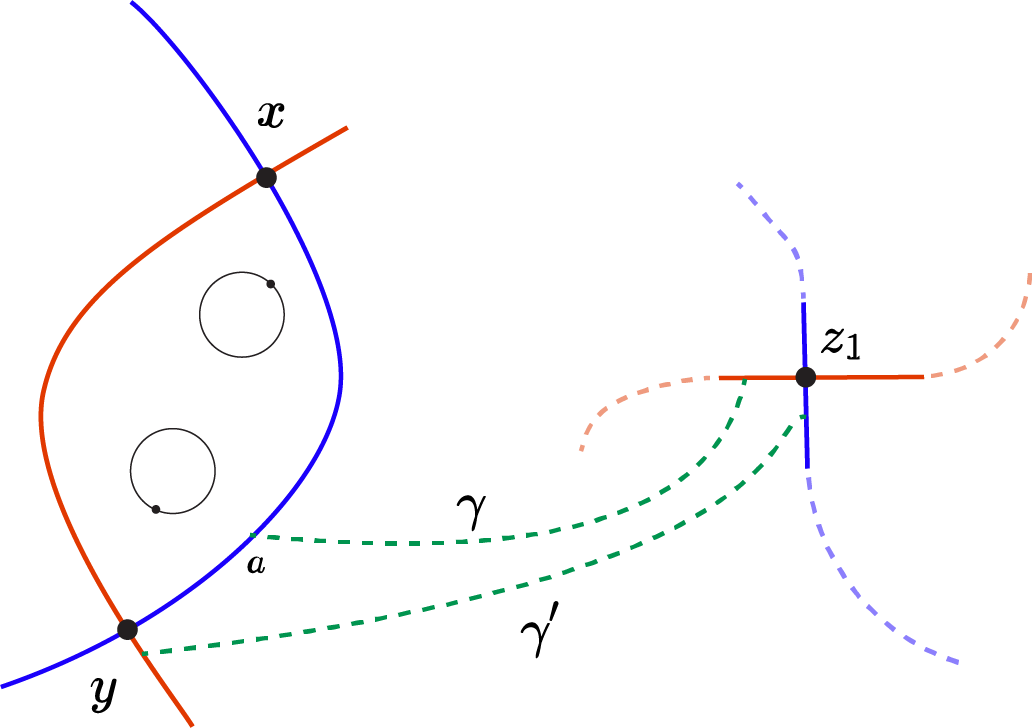} }}
    \qquad
    \subfloat{{\includegraphics[width=9cm]{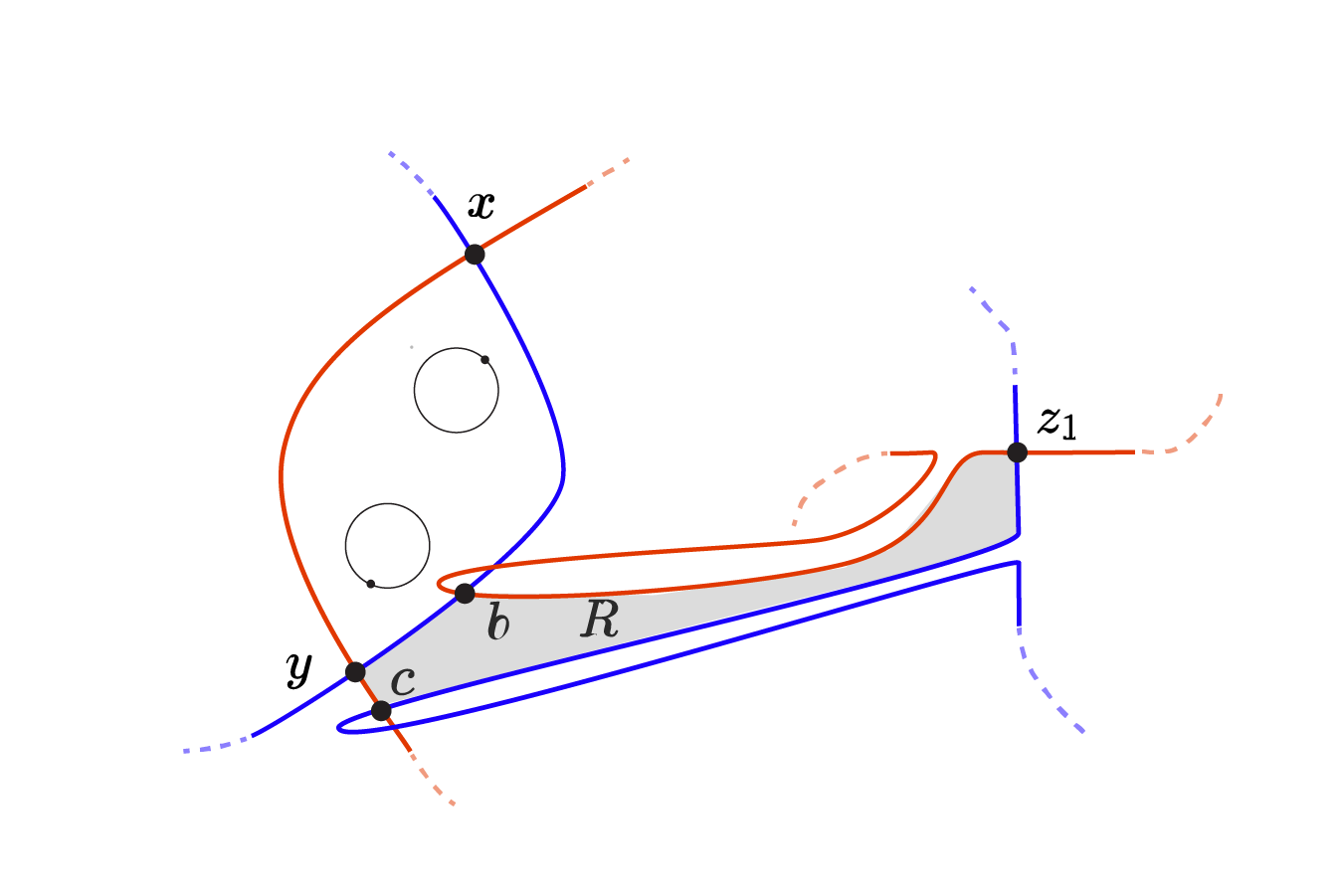} }}
    \caption{\bf 2-sided boundary}
    \label{2side}
\end{figure}
We depict handles as pairs of circles with opposite orientations and one of the handles is drawn in Figure~\ref{palms}. We now draw two ``palm trees'' as in the picture by making two finger moves on $\bm \beta$ curves and we assume that the other $m-1$ handles are in the white region ``between'' two palm trees. Let the $8$ new points of intersection between the $\bm \alpha$ and $\bm \beta$ curves be labeled as in Figure~\ref{palms}.

We now have $4$ new bigons $B_{11}, B_{12}, B_{21}$ and $B_{22}$ and $2$ new quadrilaterals $R_x$ and $R_y$. Then the domain $D*(-R_x)*(-R_y)*B_{11}* B_{12}* B_{21}*B_{22} \in \mathcal{D}(\{z_1',z_2'\},\{w_1',w_2'\})$ has $m-1$ handles and from here proceed inductively on the number of handles.

{\bf Step 4b. 2-sided boundary.}
Here we reduce this case to the $4$-sided boundary case treated in Step 4a.

As in the $4$-sided boundary case, we may assume $D$ to be an embedded 2-chain represented by a handlebody as shown in Figure~\ref{2side}. By the method of Step 0 we may assume that both angles are $90^\circ$.

We may assume that $g \ge 2$ by applying an empty stabilization with respect to $D$ if necessary. Let $D\in \mathcal{D}(\{x,z_1,\ldots, z_{g-1}\}, \{y, z_1,\ldots, z_{g-1}\})$. We may suppose $z_1\in \alpha_2 \cap \beta_2$ lies outside of $D$ by applying an empty stabilization with respect to a given $D$.
Let $\gamma$ be an arc connecting the point $z_1$ with some point $a$ on the boundary of $D$ such that $\gamma$ intersects $D$ once at the point $a$. We may assume $a$ to be somewhere on $\beta_1$. We also alter the path $\gamma$ slightly so that it starts at some point on $\alpha_2$ and $\beta_2$ is to its left near this endpoint.

Now draw $\gamma'$ almost parallel to $\gamma$ so that one of its endpoints is on $\beta_2$ near $z_1$. When $\gamma'$ reaches neighborhood of $a$ we extend it parallel to $-\beta_2$ until it hits $\alpha_1$ at some point outside of $D$ near $y$.

We make a finger move on $\bm \alpha$ along $\gamma$ and a finger move on $\bm \beta$ along $\gamma'$ creating points $b \in \alpha_2 \cap \beta_1$ and $c \in \beta_2 \cap \alpha_1$. There is now a new quadrilateral $R \in \mathcal{D}(\{y, z_1, z_2, \ldots, z_{g-1}\}, \{b, c,\ldots, z_{g-1}\})$.

Therefore, we may replace $D$ with $D*R \in \mathcal{D}(\{x, z_1, z_2, \ldots, z_{g-1}\}, \{b, c,\ldots, z_{g-1}\})$ reducing to Step 4a.



{\bf Step 5. Inner points.}
\begin{figure}
    \centering
    \subfloat{{\includegraphics[width=5cm]{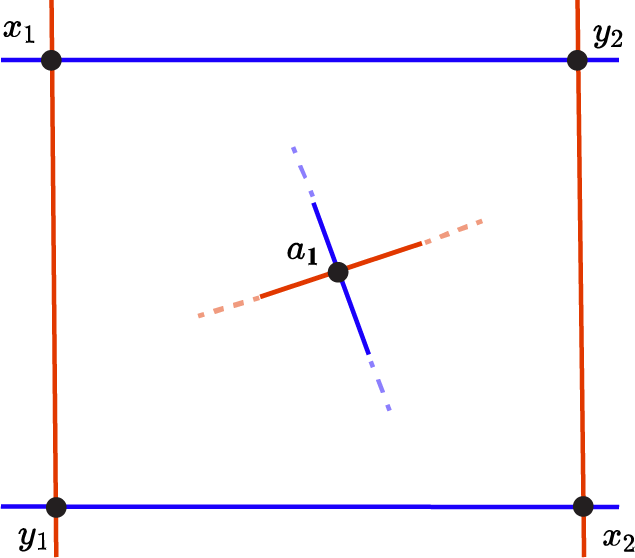} }}
    \qquad
    \subfloat{{\includegraphics[width=5cm]{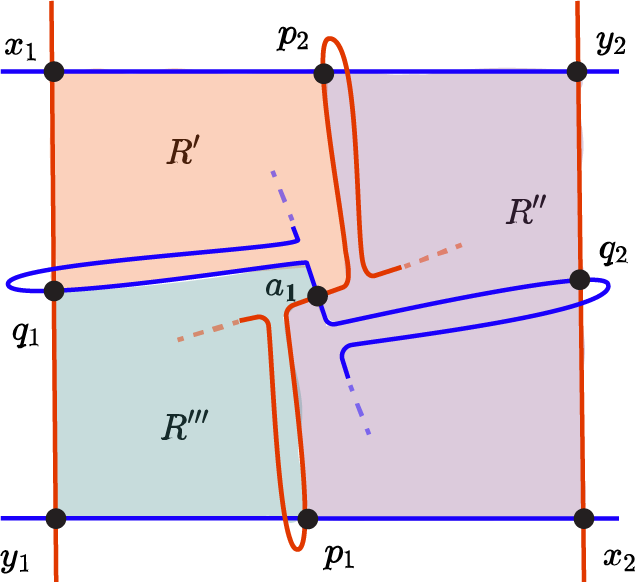} }}
    \caption{\bf Inner points}
    \label{inner}
\end{figure}

By now we have represented the initial $D$ as a composition of domains which geometrically look like bigons and rectangles, but some of the rectangles may contain some points from generators inside of them. Namely, let $R \in \pi_2({\bm x}, {\bm y})$ where $\bm x= \{x_1, x_2, a_1, \ldots a_k, b_{k+1}, \ldots, b_{g-2}\}$ and $\bm y= \{y_1, y_2, a_1, \ldots a_k, b_{k+1}, \ldots, b_{g-2}\}$ where the $a_i$'s lie inside of $R$ and the $b_j$'s lie outside of $R$.

Let $a_1 \in \alpha_3 \cap \beta_3$. We make finger moves on $\alpha_3$ and $\beta_3$ creating $2$ points of intersection between $\alpha_3$ and $\beta_1, \beta_2$ and $2$ points of intersection between $\beta_3$ and $\alpha_1, \alpha_2$ (actually, we created twice more points, but we stress our attention on these $4$). We call these points $p_1, p_2, q_1, q_2$, respectively. This is illustrated in Figure~\ref{inner}.

Then we may see that $R=R'*R''*R'''$ where 
\[
R' \in \mathcal{D}(\bm x, \{q_1,x_2, p_2, a_2 \ldots a_k, b_{k+1}, \ldots, b_g\}),
\]
\[
R'' \in \mathcal{D}(\{q_1,x_2, p_2, a_2 \ldots a_k, b_{k+1}, \ldots, b_g\}, \{q_1,y_2, p_1, \ldots a_k, b_{k+1}, \ldots, b_g\}),
\]
\[
R''' \in \mathcal{D}(\{q_1,x_2, p_1, a_2 \ldots a_k, b_{k+1}, \ldots, b_g\}, \bm y).
\]

Each of these $3$ rectangles has fewer points from generators inside than $R$ and we may proceed by induction on the number of points.

This completes the proof of Theorem~\ref{main} assuming Claim~\ref{whole} below.
\end{proof}

\begin{cl}\label{whole}
The surface $\Sigma\in \mathcal{D}(\bm x, \bm x)$ for ${\bm x}=\{x_1,\ldots, x_g\}$ is a domain which can be decomposed into bigons and rectangles after applying a sequence of finger moves. 
\end{cl}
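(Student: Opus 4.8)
The plan is to avoid building the decomposition from scratch and instead to bootstrap off the embedded handlebody reduction already carried out in Steps~4a--4b of the proof of Theorem~\ref{decomp}, by peeling a single bigon off the all-ones domain. First observe that the $2$-chain $\Sigma$ with coefficient $1$ on every region is closed, so $\partial\Sigma=0$ and hence $\partial(\partial\Sigma|_{\bm\alpha})=0=\bm z-\bm z$ for \emph{every} generator $\bm z$; thus $\Sigma\in\mathcal D(\bm z,\bm z)$ for all $\bm z$, and a decomposition may be based at whatever generator is convenient. Moreover the all-ones domain is preserved by finger moves: in the coefficient rule $(a,b,c)\mapsto(a,b,c,b,a+c-b)$ of the finger-move definition, setting $a=b=c=1$ returns all $1$'s. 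So it suffices to produce, after finger moves, one expression $\Sigma=D_1*\cdots*D_k$ with each $D_i$ a bigon or a rectangle (or a negative of one).

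The key move is to create an embedded bigon region and subtract it. Concretely, I would perform a single finger move on $\alpha_1$ pushing a finger across the curve $\beta_{\sigma(1)}$ on which $x_1$ lives, producing two new intersection points $p,q\in\alpha_1\cap\beta_{\sigma(1)}$ and a small embedded bigon region $B_0$ between the finger tip and $\beta_{\sigma(1)}$, with $\gamma$ chosen away from $x_2,\dots,x_g$. Then $\bm p=\{p,x_2,\dots,x_g\}$ and $\bm q=\{q,x_2,\dots,x_g\}$ are both generators and $B_0\in\mathcal D(\bm p,\bm q)$. Basing at $\bm p$, write $\Sigma=B_0*D$ with $D=\Sigma-B_0\in\mathcal D(\bm q,\bm p)$. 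The crucial point is that $D$ has coefficients in $\{0,1\}$ (it equals $1$ everywhere except $0$ on the region $B_0$), so $D$ is \emph{already} an embedded $2$-chain: its support is $\Sigma$ with an open disk removed, i.e.\ a disk with $g(\Sigma)$ handles, and its boundary $\partial D=-\partial B_0$ is a single $2$-sided (bigon) curve, which by the method of Step~0 we may take to have $90^\circ$ angles.

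Now I would feed $D$ directly into the machinery of Step~4: since $\partial D$ is already embedded, connected, and $2$-sided, Steps~1--3 are unnecessary, and $D$ enters Step~4b as an embedded handlebody with $2$-sided boundary, which Step~4b converts (by composing with a single rectangle) into the $4$-sided embedded case of Step~4a, where the handles are stripped off one at a time by the ``palm tree'' construction. Composing the resulting bigons and rectangles back together with $B_0$ gives $\Sigma=B_0*D=$ a composition of bigons and rectangles. The empty stabilizations appearing in Step~4b are taken with respect to $D$, which is legitimate because $D$ has regions of coefficient $0$ (e.g.\ $B_0$), matching exactly the way this Claim is used in Step~4a; note that one cannot, and need not, stabilize with respect to $\Sigma$ itself.

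The main thing to verify, and the only real obstacle, is that this argument is \emph{not circular}: Step~4a invokes Claim~\ref{whole} solely to add a multiple $k[\Sigma]$ to a domain in order to make it embedded. Because our input $D=\Sigma-B_0$ is embedded from the outset, we run Step~4a with $k=0$, so the reference to Claim~\ref{whole} is never triggered. Making this airtight requires checking that the embedded structure is preserved throughout the handle reduction, i.e.\ that each palm-tree step, although it formally composes with negative rectangles $-R_x,-R_y$ and four bigons, outputs again an embedded handlebody with one fewer handle (so the intermediate formal sums never force us back to the ``add $k[\Sigma]$'' step). This is precisely the geometric content of Step~4a when started from an embedded domain, and I would state it as the one point needing explicit care before concluding by induction on the number of handles.
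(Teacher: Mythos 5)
Your plan is, in essence, the paper's own genus-one argument promoted to all $g$: the paper's $g=1$ case also splits off a single bigon by a finger move, stabilizes inside it, and feeds the embedded complement into Step 4b, and your observation that an embedded input forces $k=0$ in Step 4a (so no circularity) is correct and matches the paper's logic. But two points you treat as routine are actual gaps. The first is the base point of the composition. Your chain $\Sigma=B_0*D$ runs from $\bm p$ to $\bm p$, where $\bm p$ is a generator created by the finger move, whereas Claim~\ref{whole} is consumed in Step 4a by splicing a decomposition of $\pm k\Sigma$ into a composition chain at the \emph{specific} generator of the $4$-sided domain being processed; a chain based at $\bm p$ cannot be spliced at that generator, and there need not exist any domain at all connecting it to $\bm p$, so ``a decomposition may be based at whatever generator is convenient'' is not justified. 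The repair is exactly the paper's $g=1$ move: push the finger across $\beta_{\sigma(1)}$ immediately adjacent to $x_1$ and split off the bigon whose corners are $x_1$ and the nearer new intersection point, so that the chain is based at $\bm x$ itself.

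The second gap is more substantial. In your construction every shared coordinate $x_2,\dots,x_g$ lies inside $D$, so Step 4b forces an empty stabilization, and the only coefficient-zero region of $D$ is $B_0$; but stabilizing inside $B_0$ destroys it as a bigon. After the connected sum, the region $B_0$ merges with the complement of $\alpha_{g+1}\cup\beta_{g+1}$ in the torus summand, so in the final diagram the $2$-chain that was $B_0$ contains a handle and the new generator coordinate $z$ in its interior. Hence your concluding step, ``composing the resulting bigons and rectangles back together with $B_0$'', is not available: that piece is no longer a bigon of the final diagram and must itself be decomposed. This can be done --- after Step 4b's finger moves carve the quadrilateral $R$ out of it, what remains is an embedded domain with one handle and no interior generator points, which the finger-move steps (Steps 3, 4a and 0) can digest, so the regress ``stabilize, spoil a piece, re-decompose it'' does terminate --- but this argument is absent from your proof, and without it the induction does not close up. Note that this is precisely what the paper's $g>1$ construction is engineered to avoid: the diffeomorphism gathering $\bm x$ into a small disk and the split-off rectangle $R$ with $x_3',\dots,x_g'$ in its \emph{interior} ensure that all shared coordinates of the complement $\Sigma-R$ lie outside it, so no stabilization is ever needed and no previously split-off piece is ever spoiled. (The paper's own $g=1$ paragraph shares your second gap, but there it is confined to one low-genus case, whereas your uniform approach relies on it for every $g$.)
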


\begin{proof}
Let us assume $g>1$. We may apply a diffeomorphism $\varphi$ to the Heegaard diagram $(\Sigma, \bm \alpha, \bm \beta)$ such that the image of $\bm x$ is a collection $\bm x'$ such that all $x_i'$ are located in some small disk region such that $x_1'$ and $x_2'$ are on the boundary of the convex hull of $\bm x'$. Let us denote by $(\Sigma, \bm \alpha', \bm \beta')$ the image of the initial diagram under $\varphi$, i.e. $\bm \alpha'= \varphi(\bm \alpha)$ and $\bm \beta'= \varphi(\bm \beta)$. We may assume that $x_1' \in \alpha_1' \cap \beta_2'$ and $x_2' \in \alpha_2' \cap \beta_1'$.
Then we make finger moves on $\alpha_1', \alpha_2'$ and on $\beta_1', \beta_2'$ as in Figure~\ref{claim} creating a rectangle $R$ with boundary $x_1'y_1'x_2'y_2'$ such that all other points of $\bm x'$ are inside of $R$. Then $\Sigma*(-R)$ connects $\bm x'$ to $\{y_1', y_2', \ldots, x_g'\}$ and it is a a region with quadrangle boundary and $g$ handles. Now we may proceed as in Step 4a of the proof of the theorem.

Let $g=1$. We can make a finger move on $\alpha_1$ which creates a bigon $B$ connecting given point $x$ to some point $x'$. Then $(-B)*\Sigma \in \mathcal{D}(\{x'\}, \{x\}) $. Now we make a stabilization inside $B$ and can proceed as in the proof of Step 4b.
 \end{proof}

\begin{figure}
    \centering
    \subfloat{{\includegraphics[width=1.8cm]{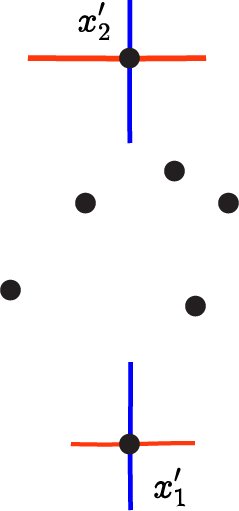} }}
    \qquad
    \subfloat{{\includegraphics[width=6cm]{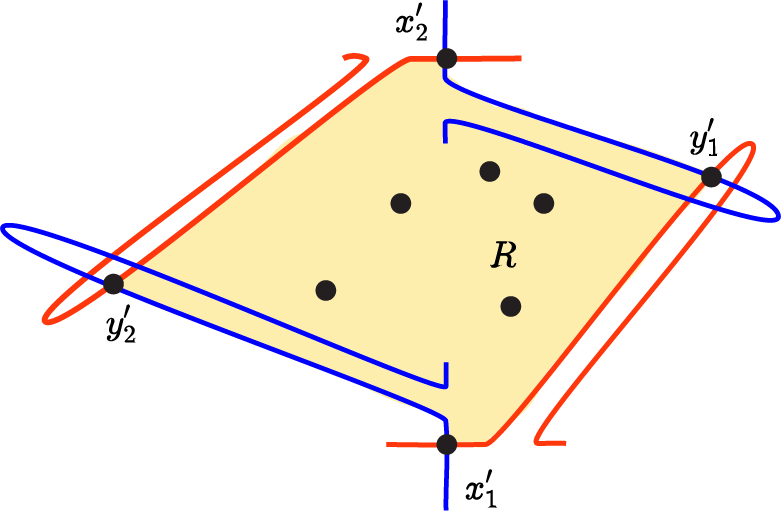} }}
    \caption{\bf Claim 3.2}
    \label{claim}
\end{figure}

\end{document}